\documentclass[12pt,reqno]{amsart}

\usepackage[LGR,T1]{fontenc}
\usepackage[utf8]{inputenc}

\title[]{Revisiting Fortet's proof of existence of a solution to the Schrödinger system}
\author[]{Christian Léonard}
\date{August 2018}

\usepackage{amssymb, amsmath, amsfonts, latexsym, enumerate}
\usepackage[usenames,dvipsnames]{pstricks}
\usepackage{epsfig}
\RequirePackage[colorlinks,linkcolor=blue,citecolor=blue,urlcolor=blue]{hyperref}
\usepackage[english]{babel}

\setcounter{tocdepth}{1}
\oddsidemargin 0cm \evensidemargin 0cm \topmargin 0cm
\textheight 24cm \textwidth 16cm

\newtheorem{theorem}[equation]{Theorem}
\newtheorem{lemma}[equation]{Lemma}
\newtheorem{proposition}[equation]{Proposition}
\newtheorem{corollary}[equation]{Corollary}

\newtheorem{hypotheses}[equation]{Hypotheses}

\theoremstyle{remark}
\newtheorem{remark}[equation]{Remark}
\newtheorem{remarks}[equation]{Remarks}

\numberwithin{equation}{section}

\newcommand{\RR}{\mathbb{R}}
\newcommand{\Rn}{\mathbb{R}^n}

\newcommand{\1}{\mathbf{1}}

\renewcommand{\ae}{\textrm{-}\mathrm{a.e.}}

\newcommand{\scal}{\!\cdot\!}

\DeclareMathOperator{\supp}{supp}

\DeclareMathOperator{\sol}{sol}

\newcommand{\boulette}[1]{$\bullet$\ Proof of #1.}
\newcommand{\Boulette}[1]{\par\medskip\noindent $\bullet$\ Proof of #1.}

\newcommand\Lim[1]{\lim_{#1\rightarrow\infty}}

\newcommand{\XX}{ \mathcal{X}}
\newcommand{\YY}{ \mathcal{Y}}
\newcommand{\XY}{\XX\times\YY}

\newcommand{\IX}{\int _{ \XX}}
\newcommand{\IY}{\int _{ \YY}}

\newcommand{\uu}{ \mathsf{u}}

\newcommand{\nn}{ \mathsf{n}}
\renewcommand{\aa}{ \mathsf{a}}
\newcommand{\bb}{ \mathsf{b}}


 \address{Modal’X, UPL, Univ Paris Nanterre, F92000 Nanterre France}
 \email{christian.leonard@parisnanterre.fr}

\begin{document}

\maketitle 
\tableofcontents

\begin{abstract} 
Since a few years, the Schrödinger problem captures the attention of a growing community of mathematicians interested in optimal transport problems. The first result of existence of a solution to this problem dates back  to  1940, when Fortet published an article on the subject. In these notes, 
Fortet's original proof of existence and uniqueness of a solution to the Schrödinger system is revisited.
\end{abstract}

In 1931, Schrödinger addressed the following problem in the articles 
\cite{Sch31,Sch32}. Let  $(\XX, \mathsf{m})$ and $(\YY, \mathsf{n})$ be two measure spaces where $ \mathsf{m}$ and $ \mathsf{n}$ are nonnegative measures.  A reference measure  on the product space $\XY$:
\begin{align}\label{eq-19}
\mathsf{p}(dxdy)=p(x,y)\, \mathsf{m}(dx)\mathsf{n}(dy)
\end{align}
is given, with $p:\XY\to[0, \infty)$  a nonnegative function which is interpreted as a Markov transition density. 
\\
The relative entropy of the probability $\pi$ on $\XY$ with respect to $ \mathsf{p}$ is defined by: $$H(\pi| \mathsf{p}):=\int _{ \XY} \log(d\pi/d \mathsf{p})\,d\pi.$$
Motivated by a natural problem in statistical physics, Schrödinger proposes to minimize $H(\pi| \mathsf{p})$ among all probability measures $\pi$ on $\XY$  with prescribed marginal measures $\mu(dx)$ and $\nu(dy).$ Otherwise stated, the Schrödinger problem is 
\begin{align}\label{eq-16}
H(\pi| \mathsf{p})\to \textrm{min};\quad \pi: \pi_\XX=\mu,\ \pi_\YY=\nu,
\end{align}
with $\pi _{ \XX}(dx):= \pi(dx\times\YY)$ and $\pi _{ \YY}(dy):= \pi(\XX\times dy).$ It is highly reminiscent of the Monge-Kantorovich optimal transport problem. For more details about this connection, one can have a look at the survey paper \cite{Leo12e}.
\\
Schrödinger informally argues that the unique solution $\pi$ of \eqref{eq-16} (if it exists) has  the same  form as $ \mathsf{p}.$ And therefore, the problem to be solved is: Find two nonnegative measures $\aa$ on $\XX$ and $\bb$  on $\YY$ such that
\begin{align*}
 \left\{ \begin{array}{lcl}
 \pi(dxdy)&=& p(x,y)\, \aa(dx)\bb(dy),\\
\pi _{ \XX}(dx)&=& \mu(dx),\\
 \pi_\YY(dy)&=&\nu(dy).
 \end{array}
 \right.
\end{align*}
 Computing the marginal measures of $\pi$, we arrive at the equivalent formulation
\begin{align}\label{eq-01}
\left\{
\begin{array}{ll}
 \displaystyle{\aa(dx) \IY p(x,y)\, \bb(dy)}&=\ \mu (dx),\\ \\
\displaystyle{\bb(dy)\IX p(x,y)\, \aa(dx)} &=\ \nu (dy).
\end{array}
\right.
\end{align}
which is called the Schrödinger system.

Schrödinger's informal derivation of the connection between \eqref{eq-16} and \eqref{eq-01} was done with $\XX=\YY=\Rn$ and $p$ a Gaussian kernel.
The first proof of existence of a solution to \eqref{eq-01} was done, under some additional hypotheses (see Theorems \ref{res-05} and \ref{res-23} below), by Fortet in a paper published in  1940  \cite{Fort40}, with  $\XX=\YY=\RR$,  $p$  positive and continuous, and $\mu,\nu$  absolutely continuous with continuous densities.

\subsection*{Motivation}

The recent paper  \cite{EP18} by Essid and Pavon offers a rewriting in English of Fortet's French-written article \cite{Fort40}, with $\XX=\YY=\Rn$ instead of $\RR$, some additional technical details which were missing in Fortet's proof and a better organization of the argument.  It is deliberately very close to the original article, even respecting the original notation. Reading \cite{EP18} which is an ode to the  contribution of Fortet, and then going back to  \cite{Fort40}, made me want to write these notes, to get a better understanding of Fortet's  arguments. I hope that this will be of some use for  someone else than myself.

\section{Main results}

We obtained  several extensions of  Fortet's results. Nevertheless, their proofs are essentially the same as Fortet's ones. The improvements are mainly in terms of regularity: we remove topological restrictions when measurability is enough, and in terms of abstract generality: we consider general spaces instead of $\XX=\YY=\RR.$  We also put some effort in clarifying Fortet's original argument: (i) measurability and nonnegative integrability issues are faced once for all at the beginning of the proof, (ii) a reduction of the problem at Proposition \ref{res-06} makes life easier for the rest of the article, (iii) a synthetic criterion is presented at Lemma \ref{res-01} and (iv) a simple trick to pass from almost everywhere to everywhere statements is given at Lemma \ref{res-10}.

\subsection*{First theorem}

In contrast with Fortet, for the  first result we only assume that $\XX$ and $\YY$ are  measurable spaces, $p$ is measurable and $\mu,\nu$ are generic probability measures.

\begin{theorem}\label{res-05}
Suppose that the transition density $p$ is positive: 
$p(x,y)>0,\ \forall x\in\XX, y\in\YY,$
 bounded:
$\sup _{ x\in\XX, y\in\YY} p(x,y)< \infty,$ and that 
\begin{align}\label{eq-29}
\IY \Big[\IX p(x,y)\,\mu(dx)\Big]^{ -1}\,\nu(dy)< \infty
\quad \textrm{or}\quad
\IX\Big[\IY p(x,y)\, \nu(dy)\Big] ^{ -1}\,\mu(dx)< \infty.
\end{align}
Then, the Schrödinger system \eqref{eq-01}  admits a unique  solution.
\end{theorem}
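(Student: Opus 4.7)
The plan is to reformulate the measure-valued system \eqref{eq-01} as a fixed-point problem for a pair of positive measurable functions $\varphi : \XX \to (0,\infty)$ and $\psi : \YY \to (0,\infty)$. Since $p > 0$, any solution $(\aa,\bb)$ forces $\aa \ll \mu$ and $\bb \ll \nu$; writing $\aa(dx) = \mu(dx)/\varphi(x)$ and $\bb(dy) = \nu(dy)/\psi(y)$ turns \eqref{eq-01} into the coupled integral equations
\begin{align*}
\varphi(x) = \IY \frac{p(x,y)}{\psi(y)}\,\nu(dy), \qquad \psi(y) = \IX \frac{p(x,y)}{\varphi(x)}\,\mu(dx).
\end{align*}
Denote by $T_1$ and $T_2$ the two integral operators above. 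Both are antitone in their argument, hence the composition $T_2 \circ T_1$ is monotone.

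Existence will follow from Fortet's alternating iteration. Assuming without loss of generality the first alternative in \eqref{eq-29}, a convenient initialization is $\psi_0(y) := \IX p(x,y)\,\mu(dx)$, which is strictly positive and, by hypothesis, satisfies $\IY \psi_0(y)^{-1}\,\nu(dy) < \infty$. Define inductively $\varphi_n := T_1(\psi_n)$ and $\psi_{n+1} := T_2(\varphi_n)$. Boundedness of $p$ together with $\IY \psi_0(y)^{-1}\,\nu(dy) < \infty$ ensures that $\varphi_0$ is everywhere finite, and positivity of $p$ keeps each $\varphi_n,\psi_n$ strictly positive. Since $T_2 \circ T_1$ is monotone, once the sign of $\psi_1 - \psi_0$ is determined, the sequences $(\psi_n)$ and $(\varphi_n)$ are each pointwise monotone for all $n \geq 0$.

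The pointwise monotone limits $\varphi_\infty$ and $\psi_\infty$ then exist in $[0,\infty]$ and, by monotone or dominated convergence applied in the integral equations, satisfy the fixed-point system --- provided $0 < \varphi_\infty, \psi_\infty < \infty$ almost everywhere. Establishing this two-sided bound is the main obstacle: one has to propagate the integrability provided by \eqref{eq-29} together with the uniform bound $p \leq \|p\|_\infty$ through the iteration, keeping the iterates pinched between a nontrivial positive lower bound and a finite upper bound. Once this is secured, the measures $\aa(dx) := \mu(dx)/\varphi_\infty(x)$ and $\bb(dy) := \nu(dy)/\psi_\infty(y)$ yield a solution of \eqref{eq-01}.

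For uniqueness, note first the gauge invariance $(\aa,\bb) \mapsto (\aa/c,\, c\,\bb)$ for $c > 0$, so uniqueness is to be understood modulo this rescaling --- equivalently, uniqueness of the coupling $\pi(dxdy) = p(x,y)\,\aa(dx)\bb(dy)$. Given two solution pairs $(\varphi,\psi)$ and $(\tilde\varphi,\tilde\psi)$, a Perron--Frobenius style extremal argument --- comparing essential suprema and infima of the ratios $\varphi/\tilde\varphi$ and $\tilde\psi/\psi$ and exploiting strict positivity of $p$ --- forces these ratios to be constants that are reciprocals of each other, yielding the desired uniqueness. Alternatively, one may invoke strict convexity of the relative entropy $H(\cdot|\mathsf{p})$ via the known link between \eqref{eq-01} and the minimization problem \eqref{eq-16}.
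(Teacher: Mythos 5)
Your reduction of \eqref{eq-01} to a scalar fixed-point problem is exactly the one the paper uses: with $\varphi = d\mu/d\aa$, your composed map $T_1\circ T_2$ is the Fortet map $\Phi$ of \eqref{eq-28}, and your initialization $\psi_0 = \IX p(\cdot,y)\,\mu(dx)$ is $\Psi[\1]$, i.e.\ you are iterating $\Phi$ from $\varphi_0 = \Phi[\1]$. So far so good. But the plan has two gaps at precisely the two places the paper identifies as the crux, and these are not details you can defer.

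First, your claimed monotonicity of the iterates does not hold. You write ``once the sign of $\psi_1 - \psi_0$ is determined, the sequences $(\psi_n)$ and $(\varphi_n)$ are each pointwise monotone.'' That conditional is fine, but there is no reason the sign of $\psi_1 - \psi_0$ is uniform: $\psi_1 \le \psi_0$ everywhere is equivalent to $\varphi_0 = \Phi[\1] \ge 1$ everywhere, and the only information available is $\IX \Phi[\1]\,d\mu = 1$ (identity \eqref{eq-21}), which forces $\Phi[\1]$ to straddle $1$ unless it is identically $1$. So in general $\psi_1 - \psi_0$ changes sign and the raw iteration is \emph{not} pointwise monotone. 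This is exactly why the paper (following Fortet) replaces ${\tilde u}_{n+1} = \Phi[{\tilde u}_n]$ by the truncated scheme $u_{n+1} = U/(n+1)\vee\Phi[u_n]\wedge U$ of \eqref{eq-03}: the $\wedge\,U$ forces the ignition inequality $u_2\le u_1$, and the $U/(n+1)\,\vee$ keeps the iterates strictly positive. Without such a modification the argument stops at the first step.

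Second, you label the two-sided bound $0 < \varphi_\infty, \psi_\infty < \infty$ as ``the main obstacle'' and then leave it unaddressed. This is not a technicality: since $\Phi$ is positively $1$-homogeneous, $0$ and $\infty$ are fixed points, and nothing in the monotone-limit argument rules them out. The paper's resolution is the combination of the integral identity of Lemma \ref{res-01} (which controls $\int \Phi[u_*]/u_*\,d\mu$), the dichotomy of Lemma \ref{res-13} (positivity of $\Phi[u_*]$ at one point implies it everywhere), and the uniform-convergence argument of Lemma \ref{res-09} (which is where positivity and boundedness of $p$ together with \eqref{eq-29} are actually consumed). Your sketch invokes none of these and gives no substitute, so the existence half of the proof is missing precisely the step that carries the hypotheses. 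The uniqueness sketch via a Perron--Frobenius style comparison of ratios is a genuinely different route from the paper's (which takes $u'\vee u''$ and again uses Lemma \ref{res-01}), and it is plausible given $p>0$, but as stated it is also only an outline.
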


\begin{proof}
Introducing $\uu:= d\mu /d\aa,$ we see that \eqref{eq-01} is equivalent to
\begin{align}\label{eq-10}
\aa= \uu ^{ -1} \mu ,\quad \bb= \left(\IX p(x;\cdot) \uu(x) ^{ -1} \mu (dx)\right) ^{ -1}\, \nu ,
\end{align}
where $\uu:\XX\to(0, \infty)$ solves the functional equation
\begin{align}\label{eq-02}
u= \Phi[u],
\end{align}
with the Fortet mapping $\Phi$ defined by
\begin{align}\label{eq-28}
\Phi[u](x):=
	\IY p(x,y)\Big[\IX p(x',y)\,u(x') ^{ -1} \,\mu (dx')\Big] ^{ -1}\ \nu (dy) ,\qquad x\in\XX,\ \mu \ae
\end{align}
The result is a direct consequence of the uniqueness result:  Proposition \ref{res-20}, and the existence result:  Theorem \ref{res-11}, for the fixed point equation \eqref{eq-02}.
\end{proof}

\begin{remarks}\label{rem-01}\begin{enumerate}[(a)]
\item
The positivity of $p$ appears as a strong  irreducibility hypothesis once one looks at the measure $ \mathsf{p}$, see \eqref{eq-19}, as the law of a Markov chain with two instants $t=0$ and $t=1.$

\item
The uniqueness of the solution must be understood up to the  transformations $u\mapsto \kappa u$ for any $\kappa>0$ if one looks at \eqref{eq-02},  or $(\aa,\bb)\mapsto (\kappa ^{ -1} \aa, \kappa \bb)$ if one looks at \eqref{eq-01}. In fact, \emph{the} solution of \eqref{eq-01} is the measure  $\pi(dxdy)=p(x,y)\aa(dx)\bb(dy).$\end{enumerate}\end{remarks}

\subsection*{Second theorem}
It is an interesting variant of Theorem \ref{res-05} where the integral estimates \eqref{eq-29}   are replaced by the following

\begin{hypotheses}\label{hyp-02}
The set $\XX$ is a topological space equipped with its Borel $ \sigma$-field and there exists a nonempty compact set $K\subset\XX$   such that the following statements are satisfied.
\begin{enumerate}[(a)]
\item
For all $y\in\YY,$ $p(\cdot,y):\XX\to (0, \infty)$ is continuous on $K$. 
\item
There exist finitely many $x_j\in\XX$ and $c_j>0$, $1\le j\le J,$ such that 
\begin{align*}
\sup _{ x\in K} p(x,y)\le \sum	_{ j\le J} c_j p(x_j,y),\quad \forall y\in\YY.
\end{align*}
\end{enumerate}
\end{hypotheses}

\begin{remarks}\ 
\begin{enumerate}[(i)]
\item
Hypothesis \ref{hyp-02}-(b) implies that
for any nonnegative function $v$ on $\YY,$  if \\ $\IY p(x,y) v(y)\, \nn(dy)< \infty$ for all $x\in \XX,$ then $ \left\{ p(x,\cdot) v(\cdot); x\in K\right\} $ is uniformly $\nn$-integrable. 
\item
Next theorem is still valid under this wider assumption. However, we state it under the Hypothesis \ref{hyp-02}-(b) which is more tractable.
\item
For comparison, Fortet (essentially) requires that  $p$ is jointly continuous and satisfies the uniform integrability property of item (i)  \emph{for any} compact subset $K\subset \XX.$ 
\end{enumerate}
\end{remarks}

\begin{theorem}\label{res-23}
Suppose as in Theorem \ref{res-05} that $p$ is positive and bounded. Under  the additional Hypotheses \ref{hyp-02}, the Schrödinger system \eqref{eq-01}  admits a unique  solution.
\end{theorem}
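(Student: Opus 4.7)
The plan is to mirror the proof of Theorem \ref{res-05}: reformulate \eqref{eq-01} as the fixed-point equation $u = \Phi[u]$ via \eqref{eq-10}--\eqref{eq-28}, obtain uniqueness from Proposition \ref{res-20}, and reduce the existence question to producing a fixed point of $\Phi$. What has to change is the argument for existence, since the integral bounds \eqref{eq-29} are no longer assumed; instead, the compactness of $K$ and the continuity and domination assumptions in Hypotheses \ref{hyp-02} should enable a Schauder-type fixed point argument on the space of continuous functions on $K$.

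First, I would analyse $\Phi$ restricted to positive continuous functions on $K$. Hypothesis \ref{hyp-02}(a) yields pointwise continuity of $x \mapsto p(x,y)$ on $K$ for each $y$, while Hypothesis \ref{hyp-02}(b), through the uniform $\nn$-integrability it entails, allows one to push limits in $x \in K$ through the outer $\nu$-integral in \eqref{eq-28} and, after reciprocation, through the inner $\mu$-integral as well. This should show that $\Phi[u]$ is continuous on $K$ and that the family $\{\Phi[u] : u \in \mathcal{F}\}$ is equicontinuous as soon as $\mathcal{F}$ consists of positive functions uniformly bounded away from $0$ and $\infty$ on $K$. Arzelà--Ascoli then supplies the compactness we need.

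Then I would run the usual iteration: set $u_{n+1} := \Phi[u_n]$ on $K$ from a positive seed, normalize at each step (using the gauge freedom $u \mapsto \kappa u$ of Remark \ref{rem-01}(b)) by fixing $u_n(x_0) = 1$ at some chosen $x_0 \in K$, establish uniform two-sided bounds on $u_n$ on $K$, extract a uniform subsequential limit $u^\star$, and pass to the limit in $u_{n_k+1} = \Phi[u_{n_k}]$ to obtain $u^\star = \Phi[u^\star]$ on $K$. Since the right-hand side of \eqref{eq-28} is defined for every $x \in \XX$, one extends $u^\star$ to the whole of $\XX$ by setting $u^\star(x) := \Phi[u^\star](x)$ for $x \notin K$, and Lemma \ref{res-10} should upgrade any remaining $\mu$-almost-everywhere identity to a statement holding everywhere.

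The main obstacle is securing the uniform two-sided bounds on the normalized iterates $u_n$ on $K$. The boundedness of $p$ controls the inner integral $\IX p(x',y)\, u_n(x')^{-1}\mu(dx')$ from above, but one must also keep this integral away from $0$ uniformly in $n$ and $y$; here positivity of $p$, the anchor $x_0$, and the finite dominating family $(x_j)_{1 \le j \le J}$ of Hypothesis \ref{hyp-02}(b) should cooperate to produce matching lower bounds by comparing the values of $\Phi[u_n]$ at $x_0$ and at the $x_j$. Making this circular estimate quantitative and stable under iteration is, I expect, the delicate step of the argument.
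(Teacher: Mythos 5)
Your plan diverges substantially from the paper's. The paper's proof of existence keeps the monotone \emph{truncated} scheme \eqref{eq-03} and invokes Proposition~\ref{res-17}, whose heart is Fortet's \emph{twisting} trick (Proposition~\ref{res-14}, Corollary~\ref{res-15}, Lemma~\ref{res-24}): one replaces $p$ by $\tilde p(x,y)=\alpha(x)p(x,y)$ built from an auxiliary Schrödinger system for the marginals $(\rho\mu,\nu)$, where $\rho$ is a positive continuous function of $\mu$-integral $1$ with $\{\rho\ge1\}\subset K$. This makes $\tilde\Phi[\1]=\rho$ exactly, hence finite everywhere, and confines the sets $A_n:=\{\tilde\Phi[\tilde u_n]\ge1\}$ to a decreasing family of nonempty closed subsets of the compact $K$; the finite-intersection property of compacts (together with continuity of $\tilde\Phi[\tilde u_n]$ on $K$, which is where Hypotheses~\ref{hyp-02} and the resulting uniform $\nu$-integrability are used) yields a point $x_o$ with $\tilde\Phi[\tilde u_*](x_o)\ge1>0$, and the dichotomy Lemma~\ref{res-13} together with Lemma~\ref{res-12} finishes. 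No Schauder-type fixed point theorem appears.

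Your Schauder/Arzel\`a--Ascoli route has concrete gaps. First, without \eqref{eq-29} you have no control on $\Phi[\1]$: boundedness of $p$ gives $\Psi[\1]\le\sup p<\infty$, but $\Phi[\1](x)=\IY p(x,y)\Psi[\1](y)^{-1}\nu(dy)$ may well be infinite, so the raw iteration $u_{n+1}=\Phi[u_n]$ is not even guaranteed to produce finite functions; the paper needs the twist precisely to replace this by $\tilde\Phi[\1]=\rho<\infty$. Second, $\Phi$ does not act on $C(K)$: the inner integral in \eqref{eq-28} runs over all of $\XX$, so you must specify $u_n$ off $K$ and control $\Psi[u_n]$ through that tail, which Hypotheses~\ref{hyp-02} (continuity and domination \emph{on} $K$ only) do not address. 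Third, and decisively, the uniform two-sided bound on normalized iterates over $K$ that Schauder would require is exactly the positivity issue that the whole proof is about — you acknowledge this yourself — and nothing in Hypotheses~\ref{hyp-02} yields such a bound a priori; in the paper positivity is obtained \emph{a posteriori}, by showing $\tilde\Phi[\tilde u_*]>0$ at one point via compactness and then propagating it with Lemma~\ref{res-13}. Finally, the unnormalized iteration $u_{n+1}=\Phi[u_n]$ is not monotone (the paper notes explicitly that in general $\Phi[u]\le u$ fails), so convergence along a normalization is itself nontrivial and the paper avoids this altogether by using the truncated scheme \eqref{eq-03}. In short, the step you flag as ``the delicate step'' is the one that requires a genuinely different idea — the twisting reduction — rather than a sharper version of your estimates.
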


\begin{proof}
Similar to Theorem \ref{res-05}'s proof, where Theorem \ref{res-11} is replaced by Proposition \ref{res-17}.
\end{proof}

\begin{corollary}\label{res-21}
The state spaces are $\XX=\YY=\Rn$. Suppose that the  transition density writes as
\begin{align*}
p(x,y)= \theta(|y-x|),\qquad x,y\in\Rn,
\end{align*}
where $|\cdot|$ is the Euclidean norm and the function $ \theta:[0, \infty)\to (0, \infty)$ is positive, bounded, continuous and there exists  some $L\ge 0$ such that $ \theta$ is  non-increasing in restriction to $[L, \infty).$ 
\\
Then, the Schrödinger system \eqref{eq-01} admits a unique solution.
\end{corollary}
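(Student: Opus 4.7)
The plan is to apply Theorem \ref{res-23}. The positivity and boundedness of $p(x,y) = \theta(|y-x|)$ are immediate from the corresponding hypotheses on $\theta$, so the work lies in verifying Hypotheses \ref{hyp-02} for some compact set $K$. I would take $K = \overline{B(0, R)}$ a closed Euclidean ball of any fixed radius $R>0$. Item (a) follows at once from the continuity of $\theta$ and of the norm.

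For item (b), I would split $\Rn$ into a region $\{|y| \leq M\}$ and its complement, with $M$ to be chosen large enough. On the bounded region, the left-hand side is controlled by $\|\theta\|_\infty$, while $\theta(|y|) = \theta(|y - 0|)$ is a positive continuous function bounded below on the compact set $\{|y| \leq M\}$; a single point $x_0 = 0$ with a sufficiently large constant then handles this regime. On the complement $\{|y| > M\}$ with $M \geq R + L$, the monotonicity of $\theta$ on $[L, \infty)$ forces $\sup_{x \in K} \theta(|y-x|) = \theta(|y| - R)$. To dominate this pointwise by finitely many $\theta(|y - x_j|)$'s, I would place the $x_j$'s as $(R + \epsilon)\omega_j$, where $\{\omega_j\}_{j \leq J}$ is a $\delta$-net of the unit sphere of $\Rn$. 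The identity $|y - x_j|^2 = (|y| - R - \epsilon)^2 + |y|(R+\epsilon)|\omega - \omega_j|^2$ with $\omega = y/|y|$, together with a calibration such as $\epsilon = R\delta^2$ and $\delta$ small, ensures that for $|y|$ beyond a computable threshold, the $x_j$ whose direction is closest to $y/|y|$ satisfies $|y - x_j| \leq |y| - R$. Monotonicity of $\theta$ on $[L, \infty)$ then yields $\theta(|y - x_j|) \geq \theta(|y| - R)$ whenever $|y - x_j| \geq L$; if instead $|y - x_j| < L$, continuity and positivity of $\theta$ on $[0, L]$ give a uniform positive lower bound, so the required domination holds up to a constant factor.

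The main obstacle is the geometric calibration in the far regime: the bound $\sup_{x \in K} p(x,y) \leq \sum_{j \leq J} c_j p(x_j, y)$ must hold for every $y \in \Rn$ simultaneously, and a finite collection of $x_j$'s must account for all possible directions from infinity toward $K$. The non-increasing behaviour of $\theta$ beyond $L$ is precisely what makes a finite directional net suffice; without it the ratio $\theta(|y|-R)/\theta(|y - x_j|)$ could blow up along certain directions. Once Hypotheses \ref{hyp-02} are verified, Theorem \ref{res-23} delivers the existence and uniqueness of a solution to the Schrödinger system.
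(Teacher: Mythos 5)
Your proposal is correct and takes essentially the same route as the paper: you apply Theorem~\ref{res-23}, choose a fixed closed ball $K$, and verify Hypothesis~\ref{hyp-02}-(b) by treating near and far $y$'s separately---one reference point (with a large constant) for the bounded regime and a finite family of points on a slightly larger sphere for the far regime, exploiting the eventual monotonicity of~$\theta$. The only variation is in the execution of the far-field domination: the paper argues via the half-space characterization of $\{|y-x|\ge|y-\bar x|\}$ and a finite subcover of the sphere by spherical caps, whereas you carry out an explicit $\delta$-net computation with the calibration $\epsilon=R\delta^2$; both implement the same geometric idea.
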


\begin{proof}
Postponed at page \pageref{pc21}.
\end{proof}

\subsection*{A  variant of Theorem \ref{res-05}}

 Again $p$ is positive,  but no topology is required and the integral estimates \eqref{eq-29} of Theorem \ref{res-05} are replaced by the
\begin{hypotheses}\label{hyp-03}\ 
\begin{enumerate}[(a)]
\item
$p$ is measurable and positive everywhere.
\end{enumerate}
There exists a measurable positive function $U:\XX\to (0, \infty)$ such that
\begin{enumerate}[(b)]
\item
 $\Psi[U](y):=\IX p(x',y)U(x')^{ -1}\,\mu(dx')< \infty$ for all $y\in\YY$,  
 
 \item
 
 $\Phi[U](x):=\IY p(x,y) \Psi[U](y)^{ -1}\,\nu(dy)< \infty,$ for all $x\in\XX,$ (recall the Fortet mapping at \eqref{eq-28})
\item[(d)]
 and for any $x_o,$ there exist  two real numbers $r> 1,\ c>0,$ such that
\begin{align*}
\IY \left( \frac{p(x,y)}{p(x_o,y)}\right) ^{ r} p(x_o,y) \Psi[U](y) ^{ -1}\, \nu(dy)
\le c\,U(x)^r,
\quad \forall x\in\XX.
\end{align*}
\end{enumerate}
\end{hypotheses}

This criterion does not appear in Fortet's work, which is done by setting $U=\1.$

\begin{proposition}\label{res-22}
Under the Hypotheses \ref{hyp-03},  the Schrödinger system \eqref{eq-01} admits a unique solution.
\end{proposition}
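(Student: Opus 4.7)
My plan is to follow the same pattern as the proof of Theorem \ref{res-05}: reduce the Schrödinger system \eqref{eq-01} to the Fortet fixed-point equation $u=\Phi[u]$ of \eqref{eq-02}, invoke Proposition \ref{res-20} to settle uniqueness, and construct a fixed point of $\Phi$ by adapting the existence argument of Theorem \ref{res-11}. Hypotheses (a)--(c) guarantee that $\Psi[U](y)$ and $\Phi[U](x)$ are everywhere finite and positive, and that every expression appearing in $\Phi$ is well-defined; the genuine new work is to use (d) as a substitute for both the boundedness of $p$ and the integral estimates \eqref{eq-29} of Theorem \ref{res-05}.

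The central use of (d) is through Hölder's inequality. For any $x_o\in\XX$ one writes
\begin{align*}
\Phi[U](x)=\IY \frac{p(x,y)}{p(x_o,y)}\,\rho_{x_o}(dy),\qquad
\rho_{x_o}(dy):=p(x_o,y)\,\Psi[U](y)^{-1}\,\nu(dy),
\end{align*}
and applies Hölder with conjugate exponents $r$ and $r/(r-1)$ to the finite measure $\rho_{x_o}$ of total mass $\Phi[U](x_o)$; hypothesis (d) then delivers the pointwise comparison
\begin{align*}
\Phi[U](x)\le c^{1/r}\,\Phi[U](x_o)^{(r-1)/r}\,U(x)=:C\,U(x).
\end{align*}
Since $\Phi$ is monotone and positively $1$-homogeneous, this comparison allows one to launch the Fortet iteration $u_{n+1}=\Phi[u_n]$ starting from $u_0=U$ inside a cone of functions comparable to $U$, after normalizing by $u_n(x_o)=1$ to quotient out the homogeneity.

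The same $L^r$-bound furnishes the uniform integrability required for the limiting procedure: the estimate
\begin{align*}
\int_A p(x,y)\,\Psi[U](y)^{-1}\,\nu(dy)\le c^{1/r}\,U(x)\,\rho_{x_o}(A)^{(r-1)/r}
\end{align*}
is valid for every measurable $A\subset\YY$ and propagates to the iterates as long as they are sandwiched between two multiples of $U$. Following the pattern of Theorem \ref{res-11}'s proof, one extracts a subsequence converging $\mu\ae$ to some $u_\infty$, applies dominated convergence inside $\Phi$ (with $p(x,y)\Psi[u_n](y)^{-1}$ dominated by a multiple of $p(x,y)\Psi[U](y)^{-1}$, which is $\nu$-integrable by (c)), and obtains $u_\infty=\Phi[u_\infty]$ $\mu\ae$; Lemma \ref{res-10} then upgrades this identity to every $x\in\XX$.

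The delicate point will be the two-sided cone control on the iterates. The upper bound $u_n\le \beta U$ propagates cleanly through one step of $\Phi$ by means of the comparison above, but the matching lower bound $u_n\ge \alpha U$ is not directly output by (d). A natural remedy is to apply the Hölder argument a second time, this time inside $\Psi$, producing an upper bound of the form $\Psi[u_n]\le \mathrm{const}\cdot \Psi[U]$, which then translates into the missing lower bound on $\Phi[u_n]$. Once this cone is stable under $\Phi$, Proposition \ref{res-06} takes care of the reduction step and the existence part is complete.
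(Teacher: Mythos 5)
Your Hölder computation is exactly the inequality the paper relies on (it is the content of Lemma \ref{res-19}), and your identification of hypothesis (d) as the engine of the positivity argument is correct, as is the plan to invoke Proposition \ref{res-20} for uniqueness and Proposition \ref{res-06} for the reduction. But you then plug this into the wrong iteration, and that creates gaps the paper's route (Proposition \ref{res-18}) avoids entirely. The paper does not run the bare iteration $u_{n+1}=\Phi[u_n]$; it runs the truncated scheme \eqref{eq-03}, $u_{n+1}:=U/(n+1)\vee\Phi[u_n]\wedge U$ starting from $u_1=U$. This is monotone decreasing, so the pointwise limit $u_*$ exists with no compactness or subsequence extraction, and $0\le u_*\le U$ automatically. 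Positivity is then obtained ad absurdum: if $\Phi[u_*](x_o)=0$, the Hölder bound applied to $\Phi[u_n]$ (via $\Psi[u_n]^{-1}\le\Psi[U]^{-1}$, which is free because the truncation forces $u_n\le U$) gives $\sup_x \Phi[u_n](x)/U(x)\le c^{1/r}\Phi[u_n](x_o)^{1/r_*}\to 0$, so $\Phi[u_{n_o}]\le U$ for some $n_o$, and Lemma \ref{res-12} then yields $0<u_*=\Phi[u_*]$, a contradiction.

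Your bare iteration with cone control, by contrast, does not close. First, the sequence is not monotone, and two-sided pointwise bounds $\alpha U\le u_n\le\beta U$ are not a compactness device for $\mu$-a.e.\ convergence, so "extract a subsequence converging $\mu$-a.e." is unjustified; and even given a sub-limit, passing to the limit inside $\Psi$ (not only inside $\Phi$) needs a domination you have not produced. Second, and more fundamentally, the cone never gets off the ground: Hölder under (d) yields only the upper bound $\Phi[U]\le CU$, never a matching lower bound $\Phi[U]\ge c'U$, so a hypothesis $u_n\ge\alpha U$ produces $\Phi[u_n]\ge\alpha\Phi[U]$ but nothing comparable to $U$ from below. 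Your proposed remedy — Hölder inside $\Psi$ to obtain $\Psi[u_n]\le\mathrm{const}\cdot\Psi[U]$ — is circular, since that estimate already presupposes $u_n\ge\mathrm{const}^{-1}U$; and even if granted, it delivers $\Phi[u_n]\ge\mathrm{const}^{-1}\Phi[U]$, which again requires the nonexistent lower bound on $\Phi[U]/U$. The truncated scheme sidesteps all of this by building $u_n\le U$ and $u_n\ge U/n>0$ directly into the recursion and outsourcing the positivity of $u_*$ to the contradiction argument via Lemmas \ref{res-19} and \ref{res-12}.
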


\begin{proof}
Similar to Theorem \ref{res-05}'s proof, where Theorem \ref{res-11} is replaced by Proposition \ref{res-18}.
\end{proof}

\subsection*{Outline of the rest of the article}

The remainder of this article is dedicated to the proofs of the existence and uniqueness of a positive solution to Fortet's equation \eqref{eq-02}. Uniqueness is stated at Proposition \ref{res-20};   existence is stated at Theorem \ref{res-11} and  its variants: Propositions \ref{res-17} and  \ref{res-18}. Next section proposes a short illustration, an informal presentation of Fortet's proof is given at Section \ref{sec-informal}, we gather preliminary material at Section \ref{sec-prelim}, the main results are proved at Section \ref{sec-main} and their variants are investigated in the final sections.

\section{A Gaussian example and some comments}

\subsection*{A Gaussian example}

Right after  Schrödinger addressed  Problem \eqref{eq-16} and proposed the system of equations \eqref{eq-01}, Bernstein studied it in \cite{Bern32} and claimed (without proof) that he solved \eqref{eq-01} with: $\XX=\YY=\Rn,$ $p$ a Gaussian kernel and $\mu,$ $\nu$ Gaussian measures. 
\\
Let 
\begin{align*}
n_\kappa(z):= \sqrt{\det\kappa/ (2\pi) ^n}  \exp \left( - {z\cdot\kappa z}/{2}\right),\qquad z\in\Rn,
\end{align*}
be the density of the Gaussian probability measure with variance $\kappa^{ -1},$ where $\kappa$ is a positive definite symmetric matrix ($\kappa>0$).  We assume that 
\begin{align*}
p(x,y)=n_c(y-x),\quad \mu(dx)=n_a(x)\, dx\quad \textrm{and}\quad \nu(dy)=n_b(y)\, dy,
\end{align*}
where $a,b,c>0$  are  positive definite symmetric matrices, not to be confused with the unknown  measures $\aa$ and $\bb$  of the Schrödinger system.

\begin{corollary}\label{res-25}
In this Gaussian setting, the Schrödinger system admits a unique solution for any positive $a,b$ and $c$.
\end{corollary}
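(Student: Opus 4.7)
The strategy is to reduce the Gaussian case to Corollary \ref{res-21} by a linear change of coordinates that normalizes the covariance of the transition kernel. Let $T := c^{1/2}$ be the positive definite symmetric square root of $c$, and push the data forward by $T$: set $\tilde\mu := T\pf \mu$, $\tilde\nu := T\pf \nu$, and define $\tilde p(\tilde x,\tilde y) := p(T^{-1}\tilde x, T^{-1}\tilde y)$. Since $T^2 = c$ with $T$ symmetric, $(y-x)\cdot c(y-x) = |T(y-x)|^2 = |\tilde y - \tilde x|^2$, hence
\begin{align*}
\tilde p(\tilde x,\tilde y) = \theta(|\tilde y - \tilde x|), \qquad \theta(r) := \sqrt{\det c/(2\pi)^n}\, e^{-r^2/2}.
\end{align*}
The function $\theta$ is positive, bounded, continuous, and globally non-increasing on $[0,\infty)$, so the assumptions of Corollary \ref{res-21} hold for the transformed data $(\tilde p,\tilde\mu,\tilde\nu)$.

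The second step is to verify that the Schrödinger system \eqref{eq-01} is equivariant under the measurable bijection $T$: the pair $(\aa,\bb)$ solves \eqref{eq-01} for $(p,\mu,\nu)$ if and only if $(T\pf\aa, T\pf\bb)$ solves the same system for $(\tilde p,\tilde\mu,\tilde\nu)$. This follows from a short change-of-variables calculation using the push-forward identity $\int g\,d(T\pf\rho) = \int g\circ T\,d\rho$ applied to each marginal equation in \eqref{eq-01}: the inner integral $\int p(x,y)\,\bb(dy)$ transforms into $\int \tilde p(Tx,\tilde y)\,\tilde\bb(d\tilde y)$, and both sides of each equation match under the substitution $\tilde x = Tx$.

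Combining the two steps, Corollary \ref{res-21} applied to $(\tilde p,\tilde\mu,\tilde\nu)$ delivers a unique solution $(\tilde\aa,\tilde\bb)$, whose pullback $(T^{-1}\pf\tilde\aa, T^{-1}\pf\tilde\bb)$ is then the unique solution to the original Gaussian Schrödinger system. The only step requiring any thought is the equivariance verification, but it is mechanical; no Gaussian-specific computation (such as solving the matrix Riccati equation that the solution actually satisfies) is needed for this existence and uniqueness statement.
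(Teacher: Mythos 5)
Your proof is correct, and it actually supplies a step that the paper's one-line proof (``Immediate consequence of Corollary \ref{res-21}'') silently glosses over. As stated, Corollary \ref{res-21} requires $p(x,y)=\theta(|y-x|)$ with $|\cdot|$ the Euclidean norm, but $n_c(y-x)\propto\exp(-(y-x)\cdot c(y-x)/2)$ is a function of the Euclidean length $|y-x|$ only when $c$ is a scalar multiple of the identity; for a general positive definite $c$ the corollary does not apply verbatim. Your linear change of coordinates $T=c^{1/2}$ converts the anisotropic Gaussian kernel into a genuinely radial one, and your equivariance check for the system \eqref{eq-01} under pushforward by a linear bijection is exactly what is needed to transport uniqueness and existence back. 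An alternative, equally valid, route is to observe that the proof of Corollary \ref{res-21} goes through unchanged if the Euclidean norm is everywhere replaced by the ellipsoidal norm $|z|_c:=\sqrt{z\cdot cz}$, since the set $\{y:|y-x|_c\ge|y-\bar x|_c\}$ is still a half-space and the compactness/covering argument on the sphere $\{|x|_c=2\}$ is unaffected; that reading is presumably what the paper has in mind by ``immediate.'' Both routes are essentially a change of variables, but yours makes the reduction explicit and reusable, which is preferable.
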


\begin{proof}
Immediate consequence of Corollary \ref{res-21}.
\end{proof}

\subsection*{Testing Theorem \ref{res-05} and Proposition \ref{res-22}}

Now, let us try to prove Corollary \ref{res-25} by means of Theorem \ref{res-05} and Proposition \ref{res-22}. \emph{We shall observe that the integral criteria of these results do not permit us to recover Corollary \ref{res-25} in full generality.}

\begin{enumerate}[(i)]
\item
(Theorem \ref{res-05}). We have to evaluate the quantities in \eqref{eq-29}.  
It is an easy matter to compute the Laplace transform
\begin{align*}
\int _{ \Rn} e ^{ \lambda\cdot z}\,n_\kappa(z)\, dz= \exp \left( \lambda\cdot \kappa ^{ -1} \lambda/2\right)\propto n _{ - \kappa ^{ -1}}( \lambda) ,\qquad \lambda\in\Rn,
\end{align*}
which is the heart of Gaussian calculus and implies, regardless of multiplicative constants, that
\begin{align*}
\int n_c(y-x) n _{ \alpha}(x)\, dx \propto n _{  \alpha c( \alpha+c) ^{ -1}}(y).
\end{align*}
In particular, 
$\int p(x,y)\, \mu(dx)\propto   n _{  \beta}(y) 
$
with $ \beta=ac(a+c) ^{ -1},$
and\\
$\IY [\IX p(x,y)\, \mu(dx)] ^{ -1}\, \nu(dy)\propto \IY n _{ b- \beta}(y)\, dy$ which is finite if and only if $b- \beta>0$ (in the sense of quadratic form), that is: $ab+bc-ac>0.$  Inverting the role of $\mu$ and $\nu,$ we see that  \eqref{eq-29} is equivalent to 
\begin{align*}
ab+cb-ac>0\quad \textrm{or}\quad ab+ac-cb>0.
\end{align*}
It is enough for instance that $a\ge b$ or $b\ge a$ for this to be true.
In particular,    when $a$ and $b$ are multiples of the identity (for instance in dimension $n=1$), this is always satisfied.
{However, in the general case, it may happen that none of  these inequalities are verified}.

\item (Proposition \ref{res-22}). 
Choosing $U=\1$ and any $r>1$ drives us back more or less to (i), which is unsatisfactory.   \\
Let us take something  different, but still allowing for Gaussian calculus: $$U(x)= \exp (x\cdot dx/2)\propto n _{ -d}(x),$$ with $d$ some unsigned symmetric matrix. \emph{Again we show, by going as far as possible,  that this does not permit to prove that the Schrödinger problem admits a solution in all the  cases.}
\\
For any $y,$ 
\begin{align*}
\Psi[U](y)\propto\int n_c(y-x) n_d(x)n_a(x)\,dx
	\propto n _{ c(a+d)(a+c+d) ^{ -1}}(y)
\end{align*}
is finite when $c(a+d)(a+c+d) ^{ -1}>0,$ that is
\begin{align}\label{eq-32}
(i)\ d>-a,\qquad \textrm{or}\qquad (ii)\ d<-a-c.
\end{align} 
For any $x_o,x,$
\begin{align*}
&\IY \left( \frac{p(x,y)}{p(x_o,y)}\right) ^{ r} p(x_o,y) \Psi[U](y) ^{ -1}\, \nu(dy)\\
	\propto\ \ &n _{ rc}(x))\int \exp\Big((rc(x-x_o)+cx_o)\cdot y\Big)
		n _{ b+c-c(a+d)(a+c+d) ^{ -1}}(y)\,dy\\
	\propto\ \ & n _{ rc-r^2c^2[b+c-c(a+d)(a+c+d) ^{ -1}] ^{ -1}}(x).
\end{align*}
The quantity $\Phi[U](x)$ corresponds to $r=1.$ It is finite when
\begin{align}\label{eq-30}
1-c[b+c-c(a+d)(a+c+d) ^{ -1}] ^{ -1}>0,
\end{align}
and Hypothesis \ref{hyp-03}-(c) holds when 
$	
rc-r^2c^2[b+c-c(a+d)(a+c+d) ^{ -1}] ^{ -1}>-rd,
$	
or equivalently
\begin{align}\label{eq-31}
\begin{split}
P_r(d):=d^2&+[a+2c-(r-1)b ^{ -1}c^2]d\\&+c[a+c-rab ^{ -1}c-(r-1)b ^{ -1}c^2]>0.
\end{split}
\end{align}
Taking \eqref{eq-32}-(i): $d>-a,$ into account,  \eqref{eq-30} is equivalent to
\begin{align}\label{eq-33}
(b-c)d>ac-ab-bc.
\end{align}

\begin{itemize}
\item
If $b>c,$ \eqref{eq-33} is satisfied for  all large enough $d$. Clearly,  \eqref{eq-31} is also verified for any $r>1$  when $d$ is large enough.

\item
If $b=c,$ a direct inspection shows that \eqref{eq-30} is verified for all $d>-a,$ and \eqref{eq-31} is also verified for any $r>1$  when $d$ is large enough. 
 \item
 If $b<c,$ the combination of \eqref{eq-32} and \eqref{eq-33} is
 \begin{align*}
 -a<d<-a+bc(c-b) ^{ -1}=:\bar d.
 \end{align*}
The  polynomial $P_r$ defined at \eqref{eq-31} writes as 
\begin{align*}
P _{ r}( d)=P _{ r=1}( d)-(r-1)b ^{ -1}c^2(a+c+ d),
\end{align*}
and, with $r=1$,  its value at   the right bound $\bar d$ of the above interval  is  
\begin{align*}
P _{ r=1}(\bar d)=b ^{ -1}(c-b) ^{ -2}c^3(ab-ac+bc).
\end{align*}

 \begin{itemize}
 \item
 If $ab-ac+bc>0$, then $\bar d=(c-b) ^{ -1}(ab-ac+bc)>0$ and   $P _{ r=1}(\bar d)>0$. By continuity, choosing $d$ a little below $\bar d$ and $r$ a little above 1 yields \eqref{eq-31}.
 
 \item
  If $ab-ac+bc< 0$, then $\bar d=(c-b) ^{ -1}(ab-ac+bc)< 0$ and unfortunately $P_r(d)<0$ for all $d\in(-a,\bar d)$ and all $r\ge 1.$

 \end{itemize}
\end{itemize}
It remains to look at this last situation: $b<c, ab-ac+bc<0,$ in the domain \eqref{eq-32}-(ii): $d<-a-c.$ In this case, \eqref{eq-30} implies that
$(a+c+d) ^{ -1}c^2>c-b,$ which is impossible because $a+c+d<0$ and $c-b>0.$ 
\\
Exactly as in (i), we cannot conclude when $ab-ac+bc<0.$  
\end{enumerate}

It appears that  Theorem \ref{res-05} and Proposition \ref{res-22} are less efficient than Theorem \ref{res-23} and its Corollary \ref{res-21}. The winning trick of the proof of Theorem \ref{res-23} is Fortet's idea of twisting the transition density $p$ as described at page \pageref{twist}. 

\subsection*{Personal comments}

In conclusion, my feeling is that, although its proof is beautiful, Theorem \ref{res-05} is less powerful than the results obtained by means of the entropy method which was put forward by Beurling in  \cite{Beu60} to solve Schrödinger system. However, note that Beurling's results and its subsequent improvements, see for instance the survey article \cite{Leo12e} and the references therein, also require   integrability controls (in terms of relative entropies) which might be very difficult to verify in concrete situations. 

This last remark strongly makes the case for Theorem \ref{res-23}, since it does not require such integrability estimates. Improvements of Fortet's method should focus in this direction.

\section{Informal presentation of Fortet's proof}
\label{sec-informal}

\subsection*{A recurrence scheme}

Fortet solves the fixed point equation \eqref{eq-02} by means of a convergent scheme $(u_n)$  such that $u_*:=\Lim n u_n$ satisfies
\begin{align}\label{eq-06}
u_*= \Phi[u_*].
\end{align}
Clearly  $\Phi$ is  positive and increasing: if $0\le u\le u',$ then $0\le  \Phi[u]\le \Phi[u'].$ This will be used in a crucial manner in all what follows. 
\\
In particular, if the initial data ${\tilde u}_1\ge 0$ of the scheme 
\begin{align}\label{eq-20}
{\tilde u}_{ n+1}:=\Phi[{\tilde u}_n],\ n\ge1,
\end{align} 
is such that $\tilde u_2:=\Phi({\tilde u}_1)\le {\tilde u}_1,$  the decreasing  nonnegative sequence $({\tilde u}_n)$ admits a  limit which seems to be a reasonable candidate for a solution to \eqref{eq-02}.  But in general we do not have $\Phi(u)\le u$ for a generic $u\ge 0.$
\\
On the other hand, since $\uu:=d\mu/d\aa $ is positive and finite (see \eqref{eq-07} at Proposition \ref{res-06} for a justification), we look for \emph{a solution $u_*$ which is  positive and finite}. But $\Phi$ being positive 1-homogeneous, it admits $0$ and $ \infty$ as trivial fixed points, which must be avoided.
\\
These considerations suggest that the natural  scheme \eqref{eq-20} should be modified as follows:
\begin{align}\label{eq-03}
u _{ n+1}:= U/(n+1)\vee \Phi[u_n]\wedge U,\qquad n\ge1,
\end{align}
starting at $n=1$ from a positive function $u_1=U:\XX\to(0 ,\infty)$   to be chosen adequately.

Together with the basic inequality $u_2:= U/2\vee \Phi[u_1]\wedge U\le U= u_1$, we obtain $u_3:= U/3\vee \Phi[u_2]\wedge U\le U/2\vee  \Phi[u_1]\wedge U=:u_2,$ and so on by recurrence:
\begin{align}\label{eq-04}
u_{ n+1}\le u_n,\quad \forall n\ge 1,
\end{align}
with
\begin{align*}
0<U/n\le u_n\le U< \infty,\quad \forall n\ge 1.
\end{align*}
As a decreasing nonnegative sequence, $(u_n)$ admits a pointwise limit $$u_*(x):=\Lim n u_n(x),\quad x\in\XX,$$ such that $0\le u_*\le U< \infty.$ 

We observe that the introduction of $\wedge\,U$ with $u_1=U< \infty$ forces both the desired ignition  inequality $u_2\le u_1$ and the finiteness of the scheme. On the other hand, the introduction of the operation $U/(n+1)\,\vee$ implies that $u_n$ is positive for all $n\ge 1,$ which is necessary (but not sufficient) to obtain the positivity of $u_*.$ 

Fortet proved that, under some hypotheses to be made precise later, $u_*$ is the \emph{unique} positive and finite solution  to \eqref{eq-02}, hence showing that the Schrödinger system \eqref{eq-01} admits a unique solution.

\subsection*{Some ideas of Fortet's proof}

Let us   sketch  the proof of \eqref{eq-06}. Two consecutive applications of the monotone convergence theorem in the definition of $ \Phi[u_n]$ show that in virtue of \eqref{eq-04}
\begin{enumerate}[(i)]
\item
for each $y\in\YY,$ the sequence $v_n(y):= \IX p(x',y)u_n(x') ^{ -1} \,\mu (dx')$ is increasing and $\Lim n v_n(y)= \IX p(x',y) u_*(x')^{ -1}\,\mu (dx')=:v_*(y)\in[0, \infty]$;
\item
for each $x\in\XX,$ the sequence $ \Phi[u_n](x)
	=\IY p(x,y)v_n(y)^{ -1}\,\nu (dy) $
is decreasing and 
\begin{align}\label{eq-12}
\Lim n  \Phi[u_n](x)
	=\IY p(x,y)v_*(y)^{ -1}\,\nu (dy)
	= \Phi[u_*](x)\in[0, \infty], \quad x\in\XX.
\end{align}
\end{enumerate}
Passing to the limit in \eqref{eq-03} gives
\begin{align}\label{eq-05}
u_*= \Phi[u_*]\wedge U\le \Phi[u_*],
\end{align}
which is not enough to ensure \eqref{eq-06}. 
However,  for any positive function $u$ on $\XX$ we have
\begin{align}\label{eq-21}
\begin{split}
\IX \frac{ \Phi[u](x)}{u(x)}\ \mu (dx)
	&= \int _{ \XY} \frac{p(x,y) }{u(x) \IX p(x',y)u(x')^{ -1}\,\mu (dx')}\ \mu (dx)\nu (dy)\\
	&= \IY  \frac{\IX p(x,y)u(x)^{ -1}\,\mu (dx)}{\IX p(x',y)u(x')^{ -1}\,\mu (dx')}\ \nu (dy)
	=\IY \nu (dy)\\&=1.
\end{split}
\end{align}
In particular, once it is proved  that $u_*$ does not vanish, we obtain \\ $ \displaystyle{\IX  \Big(\underbrace{\frac{ \Phi[u_*](x)}{u_*(x)}-1} _{ \ge 0}\Big)\ \mu (dx)=0}$ and it follows with \eqref{eq-05} that \eqref{eq-06} is satisfied.

Now, we turn to a rigorous treatment of this problem.

\section{Preliminary results}\label{sec-prelim}

Of course, one must be careful to ensure that all the above quantities are well defined: there are several measurability and integrability issues to be considered. Note also that it is important to verify that the fixed point $u_*$ does not vanish. The remainder of the present section is dedicated to the establishment of several preliminary results for the proof of Theorem \ref{res-11}.

\subsection*{Conventions about infinite numbers and nonnegative integrals}

For any number $s\in[0, \infty]$, we set $s ^{ -1}:= \left\{ \begin{array}{ll}
s ^{ -1},& \textrm{if } 0<s< \infty,\\
0,& \textrm{if }s= \infty,\\
\infty,& \textrm{if }s=0.
\end{array}\right. $ 
\\
It is understood that any integral of a measurable $[0, \infty]$-valued function is univoquely well-defined as a $[0, \infty]$-valued integral, by means of Beppo Levi's monotone convergence theorem. More precisely, for any measurable  function $f: \mathcal{Z}\to[0, \infty]$ and any $ \sigma$-finite nonnegative measure $m$ on $ \mathcal{Z},$ we define 
\begin{align*}
\int _{ \mathcal{Z}} f\,dm:=\sup _{ k\ge 1}\int _{ \mathcal{Z}_k} f\wedge k\, dm\in[0, \infty],
\end{align*} 
where $ \mathcal{Z}_k\uparrow \mathcal{Z}$ with $m( \mathcal{Z}_k)< \infty$ for all $k$. 
\\
Let 
$f: \mathcal{Z}\to[0, \infty)$ and $g: \mathcal{Z}\to[0, \infty]$ be  nonnegative   measurable functions with $f$ finite and $g$  taking possibly  infinite values. In order to prevent from the undetermined product $0\times \infty,$ we set  
\begin{align*}
\int _{ \mathcal{Z}} fg\, dm:=\int _{ \left\{f>0\right\} } fg\, dm.
\end{align*}
This amounts to decide that $f(z)=0\implies f(z)g(z)=0,$ even if $g(z)= \infty.$ This convention is justified by the limiting procedure
\begin{align*}
\int _{ \mathcal{Z}} fg\, dm
	:=\sup _{ k\ge 1}\int _{ \mathcal{Z}} f\ (g\wedge k)\, dm
	=\sup _{ k\ge 1}\int _{ \left\{f>0\right\} } f\ (g\wedge k)\, dm
\end{align*}
and $0\times ( \infty\wedge k)=0.$

\subsection*{Measurability issues}

Since the unknown measures  $\aa$ and $\bb$ are required to be $ \sigma$-finite, the integrals $x\mapsto\IY p(x,y)\, \bb(dy)$ and $y\mapsto\IX p(x,y)\, \aa(dx)$ in \eqref{eq-01} are well-defined numbers in $[0, \infty].$ Furthermore, as $p$ is assumed to be jointly measurable on $\XY,$ Fubini-Tonelli theorem ensures that these $[0, \infty]$-valued functions are measurable.
\\
For any $[0, \infty]$-valued function $u$ on $\XX,$ we introduce
\begin{align*}
\Psi[u](y):=\IX p(x',y)u(x')^{ -1}\,\mu(dx')\in[0, \infty],\qquad y\in\YY,\ \nu\ae
\end{align*}
so that the Fortet mapping writes as 
\begin{align*}
\Phi[u](x)=\IY p(x,y) \Psi[u](y)^{ -1}\,\nu(dy)\in[0, \infty],
\qquad x\in\XX,\ \mu\ae
\end{align*} 
These  are well-defined integrals once the above conventions are adopted.  
Again, Fubini-Tonelli theorem ensures that these $[0, \infty]$-valued functions are measurable.

\subsection*{Convergence of the algorithm}

From now on, any function is assumed to be measurable  without further mention.

\begin{lemma}[Monotonicity of $\Psi$ and $\Phi$] \label{res-03}
For any $[0, \infty]$-valued functions $u,u',$
\begin{align*}
u\le u',\ \mu\ae\implies \Psi[u]\ge\Psi[u'],\ \nu\ae\implies \Phi[u]\le\Phi[u'],\ \mu\ae
\end{align*}
\end{lemma}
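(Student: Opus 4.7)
My plan is to reduce everything to the single fact that the extended inversion map $s\mapsto s^{-1}$ on $[0,\infty]$, as defined in the conventions of the paper, is order-reversing: if $0\le s\le s'\le \infty$, then $s^{-1}\ge (s')^{-1}$. This is a trivial case-check across the three regimes $s=0$, $s=\infty$, and $0<s<\infty$ (using also the cases $s'=0$ or $s'=\infty$), so I would dispatch it in one line and then use it twice.

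First implication. Starting from $u\le u'$ $\mu$-a.e., I apply the order-reversing inversion pointwise to get $u(x')^{-1}\ge u'(x')^{-1}$ for $\mu$-a.e.\ $x'$. Multiplying by the nonnegative kernel $p(\cdot,y)$ preserves the inequality: on $\{p(\cdot,y)>0\}$ we multiply by a positive finite number, and on $\{p(\cdot,y)=0\}$ the paper's convention $\int fg\,dm:=\int_{\{f>0\}} fg\,dm$ forces both products to equal $0$. So $p(x',y) u(x')^{-1}\ge p(x',y) u'(x')^{-1}$ for $\mu$-a.e.\ $x'$, and integrating against $\mu(dx')$ (monotonicity of the $[0,\infty]$-valued integral, as recalled in the conventions subsection) yields $\Psi[u](y)\ge \Psi[u'](y)$. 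This holds in fact for \emph{every} $y\in\YY$, which is stronger than the $\nu$-a.e.\ statement asked for.

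Second implication. Assuming $\Psi[u]\ge \Psi[u']$ $\nu$-a.e., I apply the same order-reversing inversion to obtain $\Psi[u](y)^{-1}\le \Psi[u'](y)^{-1}$ $\nu$-a.e. The same argument as above (multiply by the nonnegative function $p(x,\cdot)$, with the $0\times\infty$ convention taking care of the zero set) gives $p(x,y)\Psi[u](y)^{-1}\le p(x,y)\Psi[u'](y)^{-1}$ $\nu$-a.e., and integrating against $\nu(dy)$ delivers $\Phi[u](x)\le \Phi[u'](x)$ for every $x\in\XX$, which implies the claimed $\mu$-a.e.\ statement.

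The argument has no real obstacle; the only subtlety is bookkeeping around the convention $0\times\infty=0$ and the fact that the exceptional $\mu$-null set on which $u\le u'$ might fail does not affect the integral. Both steps are structurally identical, so I would phrase the proof as a single monotonicity lemma for the map $w\mapsto \int K(\cdot,z) w(z)^{-1}\,\lambda(dz)$ with $K\ge 0$ measurable and $\lambda$ $\sigma$-finite, and then apply it twice with $(K,\lambda)=(p,\mu)$ and $(K,\lambda)=(p,\nu)$.
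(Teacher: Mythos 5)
Your argument is correct and is essentially the same as the paper's, which disposes of the lemma in one line as ``a direct consequence of the nonnegativity of the integrands''; you have simply spelled out the two ingredients (order-reversal of $s\mapsto s^{-1}$ on $[0,\infty]$ and monotonicity of the nonnegative integral) that the paper leaves implicit.
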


\begin{proof}
It is a direct consequence of the nonnegativity of the integrands.
\end{proof}

Let us revisit the sloppy argument that was proposed in view of justifying \eqref{eq-12}.  

\begin{lemma}[Convergence of the algorithm] \label{res-04}\ 
\begin{enumerate}[(a)]
\item

The sequence $(u_n)$  defined by \eqref{eq-03}  decreases: it satisfies \eqref{eq-04}, and its pointwise limit 
\begin{align*}
u_*:=\Lim n u_n
\end{align*}
exists and satisfies 
$
0\le u_*\le U.
$
\item
The sequence $(\Phi[u_n])$  decreases: for all $n\ge 1,$ $0\le \Phi[u_{ n+1}] \le \Phi[u_n]\le \Phi[U].$

\item
Under the additional assumption:
\begin{align}\label{eq-14}
\Phi[U](x)< \infty,\quad \forall x\in\XX,\ \mu\ae,
\end{align}
\begin{enumerate}[(i)]
\item
for any $n\ge1$, $\Phi[u_n]< \infty,\ \mu\ae,$
\item
the identity \eqref{eq-12} is valid, i.e.
\begin{align*}
\Lim n\Phi[u_n](x)=\Phi[u_*](x), \quad \forall x\in\XX,\ \mu\ae,
\end{align*}
\item
$u_*$ satisfies \eqref{eq-05}, i.e.
 \begin{align*}
 0\le u_*= \Phi[u_*]\wedge U\le \Phi[u_*]\le \Phi[U]< \infty,\ \mu\ae
 \end{align*}
\end{enumerate}
\end{enumerate}
\end{lemma}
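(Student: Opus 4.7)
The plan is to prove the four assertions in the order stated, each reducing to the monotonicity of $\Phi$ (Lemma \ref{res-03}) together with the monotone and dominated convergence theorems, using the conventions on $0^{-1}$, $\infty^{-1}$ and ``$0 \times \infty = 0$'' set up at the beginning of the section.

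For (a) I would argue by a straightforward induction. With $u_1 = U$, one has $u_2 = U/2 \vee (\Phi[U] \wedge U) \le U = u_1$, since the outer max is bounded above by $U$. Assuming $u_n \le u_{n-1}$ $\mu$\ae, Lemma \ref{res-03} yields $\Phi[u_n] \le \Phi[u_{n-1}]$ $\mu$\ae, and combined with $U/(n+1) \le U/n$ this gives $u_{n+1} \le u_n$. The sandwich $U/n \le u_n \le U$ follows directly from the definition of the scheme, so $(u_n)$ is a pointwise decreasing, bounded sequence whose limit $u_*$ satisfies $0 \le u_* \le U$. Part (b) is then immediate by applying Lemma \ref{res-03} twice: $\Phi[u_{n+1}] \le \Phi[u_n] \le \Phi[u_1] = \Phi[U]$ $\mu$\ae. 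Part (c)(i) is also immediate from (b) together with the standing hypothesis $\Phi[U] < \infty$ $\mu$\ae.

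The main work is (c)(ii). Since $u_n \ge U/n > 0$, the reciprocals $u_n^{-1}$ are finite, and by (a) they form a pointwise increasing sequence whose limit is $u_*^{-1}$ (possibly equal to $\infty$ on $\{u_* = 0\}$, by our convention $0^{-1} = \infty$). For each $y \in \YY$, monotone convergence yields
$$\Psi[u_n](y) = \IX p(x',y)\, u_n(x')^{-1}\,\mu(dx') \nearrow \IX p(x',y)\, u_*(x')^{-1}\,\mu(dx') = \Psi[u_*](y).$$
Taking reciprocals gives $\Psi[u_n](y)^{-1} \searrow \Psi[u_*](y)^{-1}$; on the set $\{\Psi[u_*] = 0\}$ the monotonicity forces $\Psi[u_n] = 0$ for every $n$, so both sides equal $\infty$ and the convergence still holds. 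Now fix $x$ in the full-$\mu$-measure set where $\Phi[U](x) < \infty$. Then $y \mapsto p(x,y)\, \Psi[U](y)^{-1}$ is $\nu$-integrable (its integral being $\Phi[U](x)$) and dominates the pointwise decreasing sequence $y \mapsto p(x,y)\, \Psi[u_n](y)^{-1}$, so the dominated convergence theorem yields $\Phi[u_n](x) \to \Phi[u_*](x)$.

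Finally, for (c)(iii) I would just pass to the pointwise limit in the recurrence $u_{n+1} = U/(n+1) \vee \Phi[u_n] \wedge U$. On the full-$\mu$-measure set where (c)(ii) holds, the right-hand side converges to $0 \vee (\Phi[u_*] \wedge U) = \Phi[u_*] \wedge U$, while the left-hand side converges to $u_*$, so $u_* = \Phi[u_*] \wedge U \le \Phi[u_*]$; together with $u_* \le U$ and Lemma \ref{res-03} this gives $\Phi[u_*] \le \Phi[U] < \infty$ $\mu$\ae. The only genuinely delicate point is the handling of $\{\Psi[u_*] = 0\}$ in the reciprocal passage of step (c)(ii), which is exactly what the conventions $0^{-1} = \infty$ and ``$0 \times \infty = 0$'' were tailored for.
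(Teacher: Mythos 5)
Your proof is correct and follows essentially the same route as the paper: monotonicity of $\Phi$ (Lemma \ref{res-03}) for (a) and (b), upward monotone convergence for $\Psi[u_n]\nearrow\Psi[u_*]$, and then a limit theorem requiring the finiteness hypothesis \eqref{eq-14} to pass to the limit in $\Phi[u_n]$. The only cosmetic difference is that you invoke dominated convergence with dominant $p(x,\cdot)\,\Psi[U]^{-1}$ where the paper phrases the same step as downward monotone convergence needing a finite first term; your explicit handling of $\{\Psi[u_*]=0\}$ via the $0^{-1}=\infty$ and $0\times\infty=0$ conventions is a welcome clarification of a point the paper leaves implicit.
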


\begin{proof}
\boulette{(a)}
As $\Phi$ is an increasing mapping in the sense of Lemma \ref{res-03}, the sequence $(u_n)$  satisfies \eqref{eq-04},  its pointwise limit $u_*$ exists and satisfies 
$
0\le u_*\le U.
$
\Boulette{(b)}
It follows from (a)  and the monotonicity of $\Phi$ (Lemma \ref{res-03}) again.

\Boulette{(c)}
 It also follows  from (a) and  the monotonicity of $\Phi$ that 
for $\nu$-almost all $y,$ $\Psi[u_n](y)$  increases with $n$, and by monotone convergence: 
\begin{align*}
\Lim n\Psi[u_n](y)=\Psi[u_*](y), \quad \forall y\in\YY,\ \nu\ae
\end{align*}
On the other hand, for $\mu$-almost all $x$, $\Phi[u_n](x)=\IY p(x,y)\Psi[u_n](y) ^{ -1}\,\nu(dy)$ decreases with $n$, but one cannot apply the monotone convergence theorem without assuming that for some $n_o,$ $\Phi[u_{ n_o}](x)$ is finite. Assumption \eqref{eq-14} corresponds to $n_o=1.$
\end{proof}

\subsection*{Reduction of the problem}

We show that, without loss of generality, the measures $\aa$ and $\bb$ can be chosen to be equivalent (in the sense of measure theory) to $\mu$ and $\nu$ respectively.

\begin{proposition}[Reduction of the problem] \label{res-06}
Without loss of generality, one can assume that 
\begin{equation}\label{eq-07}
\begin{array}{ll}
\supp  \mu =\XX,\qquad &\mu\ll \aa\ll\mu,\\
\supp \nu=\YY,\qquad &\nu\ll \bb\ll\nu,
\end{array}
\end{equation}
where $\supp$ stands for the topological support of a Borel measure, and also that
\begin{equation}\label{eq-08}
\left\{
\begin{array}{ccll}
\emph{(i)}&\quad  &\nu( p(x,\cdot)>0) >0,\quad &\forall x\in\XX,\ \mu\ae\\
\emph{(ii)}&&\mu( p(\cdot,y)>0)>0,\quad &\forall y\in\YY,\ \nu\ae
\end{array}
\right.
\end{equation}
\end{proposition}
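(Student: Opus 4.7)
My plan is to establish the proposition through three successive reductions, each enforcing one of the stated properties without altering the solvability of the Schrödinger system \eqref{eq-01}.

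First, for the full-support assertions in \eqref{eq-07}, I would replace $\XX$ by $\supp\mu$, $\YY$ by $\supp\nu$, and $p$ by its restriction to $\supp\mu\times\supp\nu$. Since $\mu$ and $\nu$ already live on their supports, a solution to the reduced system extends by zero to a solution of the original; conversely any solution of the original restricts to a solution of the reduced system, both sides of \eqref{eq-01} trivially vanishing on the complements of the supports.

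Next, to secure the equivalences $\aa\equiv\mu$ and $\bb\equiv\nu$, note that the first equation of \eqref{eq-01} reads $\mu(dx)=f(x)\,\aa(dx)$ with $f(x):=\IY p(x,y)\,\bb(dy)$, which already delivers $\mu\ll\aa$ for free. I would then replace $\aa$ by its restriction $\tilde\aa:=\aa\1_{\{f>0\}}$. The identity $\mu=f\tilde\aa$ still holds, because $\mu(\{f=0\})=\int_{\{f=0\}}f\,d\aa=0$, and by Tonelli
\[
0=\int_{\{f=0\}}f\,d\aa=\IY\left[\int_{\{f=0\}}p(x,y)\,\aa(dx)\right]\bb(dy),
\]
so $\int_{\{f=0\}}p(x,y)\,\aa(dx)=0$ for $\bb$-a.e.\ $y$, meaning the second equation of \eqref{eq-01} is also preserved when $\aa$ is replaced by $\tilde\aa$. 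Since $\tilde\aa$ is concentrated on $\{f>0\}$, the relation $\mu=f\tilde\aa$ yields the reverse absolute continuity $\tilde\aa\ll\mu$. The symmetric argument applied to $\bb$, with the updated density $h(y):=\IX p(x,y)\,\tilde\aa(dx)$, produces $\tilde\bb\equiv\nu$ while preserving both equations via the same Tonelli identity.

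Finally, for \eqref{eq-08}, once $\aa\equiv\mu$ the Radon-Nikodym derivative $f=d\mu/d\aa$ is strictly positive $\mu$-a.e., which means $\bb(\{y:p(x,y)>0\})>0$ for $\mu$-a.e.\ $x$; since $\bb\equiv\nu$, this is condition (i) of \eqref{eq-08}, and the symmetric reasoning for $h=d\nu/d\bb$ supplies (ii). The main obstacle I anticipate is in the equivalence step: trimming $\aa$ to $\{f>0\}$ alters the coefficient of the second equation, and trimming $\bb$ afterwards further alters $f$; the Tonelli identity above is exactly what guarantees that the discarded set contributes zero integral on the opposite side, so the two trimmings can indeed be carried out in sequence. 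The $\sigma$-finiteness of $\tilde\aa,\tilde\bb$ is automatic since both are restrictions of $\sigma$-finite measures.
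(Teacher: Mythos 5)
Your proof is correct, and on the crucial step — establishing $\aa\ll\mu$ and $\bb\ll\nu$ — it is actually more careful than the paper's. The paper works with the topological supports: it shows that $p$ vanishes $\aa\otimes\bb$-a.e.\ off $\supp\mu\times\supp\nu$, restricts $\aa$ to $\supp\mu$ and $\bb$ to $\supp\nu$, and then asserts that $\supp\aa\subset\supp\mu$ together with $\mu\ll\aa$ yields $\aa\ll\mu$. As stated that implication does not follow: $\aa$ could still charge a $\mu$-null subset of $\supp\mu$ on which $f:=\IY p(\cdot,y)\,\bb(dy)$ vanishes, and the first equation of \eqref{eq-01} would still hold. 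You instead restrict $\aa$ to $\{f>0\}$ — the set where the first equation actually pins down $\aa$ as $f^{-1}\mu$ — and then supply the one missing ingredient the paper skips: the Tonelli identity
\[
0=\int_{\{f=0\}}f\,d\aa=\IY\Big[\int_{\{f=0\}}p(x,y)\,\aa(dx)\Big]\bb(dy),
\]
which guarantees that trimming $\aa$ on $\{f=0\}$ does not disturb the second equation ($\bb$-a.e.). The symmetric argument for $\bb$ and the derivation of \eqref{eq-08} from the resulting equivalences are also spelled out, whereas the paper only remarks that \eqref{eq-08} is then necessary. In short, your route is in the same spirit (trim the unknowns, preserve the system) but replaces a support-inclusion argument that is not quite conclusive with a direct density-based one; what it buys is a fully rigorous proof of the two-sided absolute continuity, at essentially no extra cost.
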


\begin{proof}
To see this, note that if \eqref{eq-01} admits the solution $(\aa,\bb)$, then $\mu\ll \aa$ and $\nu\ll \bb.$  
On the other hand, denoting $p(x;\bb):=\IY p(x,y)\, \bb(dy),$ by the first identity in \eqref{eq-01}  we see that for any open set $G\subset \XX$ such that $ \mu(G)=0,$ we must have $\int_G p(x;\bb)\, \aa(dx)=0.$ This implies that $\1_G(x)p(x;\bb)=0,$ for all $x\in\XX,$ $\aa\ae$ Hence, $p(x;\bb)=0,$ for all $x\in\XX\setminus\supp\mu,$ $\aa\ae$, which is equivalent to $p(x,y)=0,$ for all $(x,y)\in(\XX\setminus\supp\mu)\times\YY,$ $\aa\otimes \bb\ae$ A similar reasoning also leads to $p(x,y)=0,$ for all $(x,y)\in\XX\times(\YY\setminus\supp\nu),$ $\aa\otimes \bb\ae$ Therefore
\begin{align}\label{eq-09}
p(x,y)=0, \ \forall (x,y)\not\in\supp\mu\times\supp\nu,\ \aa\otimes \bb\ae
\end{align}
This implies that \eqref{eq-01} is equivalent to
\begin{align*}
\left\{
\begin{array}{ll}
 \displaystyle{\aa'(dx) \IY p(x,y)\, \bb'(dy)}&=\ \mu (dx),\\ \\
\displaystyle{\bb'(dy)\IX p(x,y)\, \aa'(dx)} &=\ \nu (dy),
\end{array}
\right.
\end{align*}
where $\aa':=\1 _{ \supp \mu}\aa$ and $\bb':=\1 _{ \supp\nu}\bb.$ In other words, one can assume that $\supp \aa\subset\supp\mu$ and $\supp \bb\subset\supp\nu$ without loss of generality. Together with $\mu\ll \aa,\nu\ll \bb,$  this non-restricting convention implies that $\mu\ll \aa\ll\mu$ and $\nu\ll \bb\ll\nu.$ Finally, in view of this and \eqref{eq-09}, one is allowed to decide that $\XX=\supp\mu$ and $\YY=\supp\nu$. This completes the argument demonstrating that \eqref{eq-07}  is not a restriction.
\\
Finally, remark that under the conventions \eqref{eq-07}, it is necessary for \eqref{eq-01} to admit a solution  that \eqref{eq-08} is satisfied.
\end{proof}

\emph{From now on  the conventions  \eqref{eq-07} and \eqref{eq-08} are assumed to hold.  }

In particular $\uu:= d \mu/d\aa $ is a well-defined  positive finite function,  and the  Radon-Nikodym derivatives $\uu ^{ -1}$ and $ \Psi[\uu]^{ -1}$  appearing in \eqref{eq-10} are finite. The finiteness of $\uu$ and $\uu ^{ -1}$ follows from $\mu\ll \aa\ll\mu$ in \eqref{eq-07}, and the finiteness of   $ \Psi[\uu]^{ -1}$  is the object of next result.

\begin{lemma}\label{res-02}
For any nonnegative finite function $u,$ 
\begin{align*}
 \Psi[u](y)\in(0, \infty],\quad \forall y\in\YY,\ \nu\ae
\end{align*}
 does not vanish.
\end{lemma}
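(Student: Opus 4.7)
My plan is to show that the set $N := \{y \in \YY : \Psi[u](y) = 0\}$ is $\nu$-null, by combining the integration conventions from the preceding subsection with the irreducibility statement \eqref{eq-08}(ii) that is now in force.

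First I would unwind the definition of $\Psi[u](y)$ using the convention $\int fg\, dm = \int_{\{f>0\}} fg\, dm$. Since $u$ is nonnegative and finite, its formal inverse $u^{-1}$ takes values in $(0,\infty]$: it is the ordinary reciprocal where $u > 0$ and equals $+\infty$ on $\{u = 0\}$. Hence the integrand $x' \mapsto p(x',y)\,u(x')^{-1}$ is \emph{strictly positive} (possibly $+\infty$) on $\{p(\cdot,y) > 0\}$, and is treated as $0$ off that set by convention, so
\[
\Psi[u](y) \;=\; \int_{\{p(\cdot,y)>0\}} p(x',y)\,u(x')^{-1}\,\mu(dx').
\]

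Next, if $y \in N$, then the integral against $\mu$ of a strictly positive $[0,\infty]$-valued measurable function over $\{p(\cdot,y) > 0\}$ vanishes, which forces $\mu(\{p(\cdot, y) > 0\}) = 0$. Thus $N$ is contained in the set of $y \in \YY$ for which $\mu(\{p(\cdot,y) > 0\}) = 0$. To conclude, I invoke \eqref{eq-08}(ii), which asserts precisely that $\mu(\{p(\cdot,y) > 0\}) > 0$ for $y\in\YY$, $\nu\ae$; hence $\nu(N) = 0$, as required.

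There is no real obstacle here: the argument is pure bookkeeping with the $[0,\infty]$-conventions, the key observation being that the finiteness of $u$ is what prevents the integrand from being degenerate on $\{p(\cdot,y)>0\}$. The only substantive ingredient is \eqref{eq-08}(ii), which has already been justified at Proposition~\ref{res-06} and is standing as a running assumption for the remainder of the paper.
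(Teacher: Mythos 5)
Your proof is correct and follows the same route as the paper, which simply cites \eqref{eq-08}(ii); you have just made explicit the bookkeeping (the $[0,\infty]$-convention, the strict positivity of $p(\cdot,y)u(\cdot)^{-1}$ on $\{p(\cdot,y)>0\}$ thanks to the finiteness of $u$, and the elementary fact that a strictly positive function integrating to zero must live on a null set) that the paper leaves implicit.
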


\begin{proof}
It is a direct consequence of   \eqref{eq-08}-(ii).
\end{proof}

Next lemma is a key result for the proofs of existence and uniqueness of the positive solution to $\Phi[u]=u.$

\begin{lemma}\label{res-01}
Suppose that 
\begin{align}\label{eq-15}
\Psi[U](y)< \infty,\quad \forall y\in\YY,\ \nu\ae
\end{align}
Then, for any function $u$ such that $0\le u\le U,$
\begin{align*}
\int _{ \{\Phi[u]>0\}} \frac{\Phi[u]}{u}\, d \mu= \nu(\Psi[u]< \infty).
\end{align*}
\end{lemma}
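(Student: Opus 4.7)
The plan is to derive the identity by writing out $\Phi[u]$ and $\Psi[u]$ in full, swapping the order of integration by Fubini--Tonelli, and then observing that $\Psi[u](y)^{-1}\Psi[u](y)$ collapses to the indicator of the set where $\Psi[u]$ is finite (and positive). The restriction to $\{\Phi[u]>0\}$ on the left will only be a cosmetic matter handled by the paper's convention $0\cdot\infty=0.$

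The key preliminary to set up is the positivity of $\Psi[u]$. Since $u$ is finite by hypothesis ($0\le u\le U$ and $U$ finite), Lemma \ref{res-02} yields $\Psi[u](y)\in(0,\infty]$ for $\nu$-a.e.\ $y$; in particular $\Psi[u](y)^{-1}$ is finite $\nu$-a.e., equals $0$ precisely on $\{\Psi[u]=\infty\}$, and on $\{\Psi[u]<\infty\}$ we have $\Psi[u](y)^{-1}\Psi[u](y)=1.$ Also, by the conventions, $\Phi[u](x)u(x)^{-1}=0$ whenever $\Phi[u](x)=0$ (either $0\cdot u(x)^{-1}=0$ if $u(x)>0$, or $0\cdot\infty=0$ if $u(x)=0$), so the left-hand side of the claim equals the unrestricted integral $\int_\XX \Phi[u]/u\, d\mu.$

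The core computation I would then run is: insert the definition of $\Phi[u],$ drop the $\{\Psi[u]=\infty\}$ part of the $\nu$-integral (since $\Psi[u]^{-1}$ vanishes there), apply Fubini--Tonelli to the nonnegative integrand $p(x,y)\,u(x)^{-1}\,\Psi[u](y)^{-1},$ and recognize the inner $\mu$-integral as $\Psi[u](y)$:
\begin{align*}
\int_\XX \frac{\Phi[u](x)}{u(x)}\,\mu(dx)
&=\int_\XX u(x)^{-1}\int_{\{\Psi[u]<\infty\}} p(x,y)\,\Psi[u](y)^{-1}\,\nu(dy)\,\mu(dx)\\
&=\int_{\{\Psi[u]<\infty\}} \Psi[u](y)^{-1}\Big(\int_\XX p(x,y)\,u(x)^{-1}\,\mu(dx)\Big)\,\nu(dy)\\
&=\int_{\{\Psi[u]<\infty\}} \Psi[u](y)^{-1}\Psi[u](y)\,\nu(dy)=\nu(\Psi[u]<\infty),
\end{align*}
using in the last step that $\Psi[u]\in(0,\infty)$ off a $\nu$-null set inside $\{\Psi[u]<\infty\},$ by Lemma \ref{res-02}.

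I expect the only delicate point to be bookkeeping with the $[0,\infty]$-valued conventions: checking that the convention $0\cdot\infty=0$ is compatible with both the reduction from $\int_{\{\Phi[u]>0\}}$ to $\int_\XX$ on the left and the collapse $\Psi[u]^{-1}\Psi[u]=\mathbf 1_{\{0<\Psi[u]<\infty\}}$ on the right. The Fubini--Tonelli step itself is standard since all integrands are nonnegative and measurable; no integrability hypothesis on $\Phi[u]$ or $\Psi[u]$ is needed because the identity is asserted in $[0,\infty].$
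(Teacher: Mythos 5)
Your proof is correct, but it follows a genuinely different route from the paper's. The paper replaces $u$ by the truncation $u^k := U/k \vee u \ge U/k > 0$ and then passes to the limit through two applications of monotone convergence; the point of the truncation is that $1/u^k \le k/U$ is everywhere finite and, thanks precisely to \eqref{eq-15}, $\Psi[u^k]\le k\Psi[U]$ is finite $\nu$-a.e., so every product the paper encounters has one finite factor and falls under the restricted $0\cdot\infty$ convention stated at the beginning of Section~\ref{sec-prelim}. Your one-shot Fubini skips the truncation, but in exchange it relies on the unrestricted rule $\infty\cdot 0=0$ for products of two $[0,\infty]$-valued factors. You flag this correctly for the passage from $\int_{\{\Phi[u]>0\}}$ to $\IX$ and for the collapse $\Psi[u]^{-1}\Psi[u]=\1_{\{0<\Psi[u]<\infty\}}$; there is, however, a third occurrence you do not mention, namely inside the Fubini step itself: the product-space integrand $u(x)^{-1}p(x,y)\Psi[u](y)^{-1}$ is of the form $\infty\cdot 0$ on $\{u(x)=0,\ p(x,y)=0\}$ and must be declared zero there (consistent with the finite factor $p$ vanishing) before Fubini--Tonelli is even applicable, so the remark ``standard since nonnegative and measurable'' glosses over a genuine pointwise definition issue. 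Once that is in force your computation is sound, and a noteworthy byproduct is that you never invoke \eqref{eq-15}: it enters the paper's argument solely to keep the truncated quantity $\Psi[u^k]$ finite, which your route sidesteps. The trade is thus: the paper's proof is longer and uses \eqref{eq-15} but never steps outside its stated convention; yours is shorter and drops that hypothesis, at the price of extending the convention to all indeterminate products, including the one you omitted.
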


\begin{proof}
For any integer $k\ge 1,$ denoting $u^k:=U/k \vee u,$ we see that
$0<U/k\le u^k\le U,$  the sequence $(u^k)$ decreases pointwise to $u,$ and with   Lemma \ref{res-03}, Lemma \ref{res-02} and our hypothesis \eqref{eq-15}, we obtain
\begin{align}\label{eq-17}
\Psi[u^k]\le k\Psi[U]< \infty,\qquad \nu\ae
\end{align}
This being said, 
by monotone convergence, 
\begin{align*}
\int _{ \{\Phi[u]>0\}} \frac{\Phi[u]}{u}\, d \mu
	=\sup_k \IX \frac{\Phi[u]}{u^k}\, d \mu.
\end{align*}
On the other hand, for any $k\ge 1,$ with Fubini-Tonelli theorem
\begin{align*}
\IX \frac{\Phi[u]}{u^k}\, d \mu
	=\int _{ \XY}   \frac{p(x,y)}{u^k(x)}\  \Psi[u](y) ^{ -1}\, \mu(dx)\nu(dy)
	=\IY \Psi[u^k]\, \Psi[u] ^{ -1}\,d\nu,
	\end{align*}
where  the estimate \eqref{eq-17} is  necessary  to prevent from  undetermined products at those $y$ such that $\Psi[u](y) ^{ -1}=0$, also note that by Lemma \ref{res-02}, $\Psi[u] ^{ -1}< \infty,\ \nu\ae$ 
With the monotonicities  of the mapping $\Psi$ and  the sequence $(u^k)$, we see with Beppo-Levi theorem that $ \Psi[u^k]\, \Psi[u] ^{ -1}$ is an increasing sequence converging pointwise to $\1 _{\{ \Psi[u]< \infty\}}.$ Therefore
\begin{align*}
\sup_k \IX \frac{\Phi[u]}{u^k}\, d \mu
	=\nu(\Psi[u]< \infty).
\end{align*}
The announced result is a direct consequence of these  considerations.
\end{proof}

\begin{remark} 
The assumption $\Phi[U]< \infty$ requires that $U$ is not too large, while $\Psi[U]< \infty$ forces $U$ not to be too close to zero.
\end{remark}

Next result is merely a remark, but it is of practical use.

\begin{lemma}[From almost everywhere to everywhere] \label{res-10}
Let the nonnegative function  $u$ be defined $\mu\ae$ and satisfy $u=\Phi[u],\ \mu\ae$  Then, the function $\bar u:=\Phi[u]$ is defined everywhere and satisfies $\bar u=\Phi[\bar u],$ everywhere.
\end{lemma}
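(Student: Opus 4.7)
The plan is to exploit the fact that the Fortet mapping $\Phi$ only sees a function through integration against $\mu$: writing
\begin{align*}
\Phi[u](x)=\IY p(x,y)\Big[\IX p(x',y)u(x')^{-1}\,\mu(dx')\Big]^{-1}\,\nu(dy),
\end{align*}
we observe that altering $u$ on a $\mu$-null set leaves $\Psi[u](y)$ unchanged for every $y\in\YY$, and hence leaves $\Phi[u](x)$ unchanged for every $x\in\XX$. In particular, $\bar u=\Phi[u]$ depends only on the $\mu$-equivalence class of $u$, which is exactly the data we are given; thus $\bar u$ is defined on all of $\XX$ (with values in $[0,\infty]$) without any ambiguity.

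Next I would invoke the hypothesis $u=\Phi[u]$ $\mu$-a.e., which says precisely that $\bar u=\Phi[u]=u$ holds $\mu$-almost everywhere. Since $\bar u$ and $u$ agree $\mu$-a.e., the remark of the previous paragraph applied to $\bar u$ gives $\Psi[\bar u](y)=\Psi[u](y)$ for every $y\in\YY$ (modulo the $\nu$-null set where $\Psi[u]$ was only defined $\nu$-a.e.\ to begin with; but since both sides are defined by the same $\mu$-integral, the equality in fact holds for every $y$). Consequently
\begin{align*}
\Phi[\bar u](x)=\IY p(x,y)\Psi[\bar u](y)^{-1}\,\nu(dy)=\IY p(x,y)\Psi[u](y)^{-1}\,\nu(dy)=\Phi[u](x)=\bar u(x),
\end{align*}
valid for every $x\in\XX$. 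This yields $\bar u=\Phi[\bar u]$ everywhere, as claimed.

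There is essentially no obstacle here; the only point requiring a brief word is to observe carefully that the conventions adopted for integrals of $[0,\infty]$-valued functions (in particular the rule $0\times\infty=0$) ensure that $\Psi[u]$ and $\Phi[u]$ are literally equal, not just equal almost everywhere, once one fixes representatives — which is exactly the content of ``measurability and nonnegative integrability issues are faced once for all at the beginning.'' So the lemma is really a bookkeeping statement: the operator $\Phi$ automatically produces an everywhere-defined representative of any fixed point, and passing to this representative turns an almost-everywhere identity into an everywhere identity.
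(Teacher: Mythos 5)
Your proof is correct and is essentially the same as the paper's: both rest on the observation that $\bar u=\Phi[u]$ agrees with $u$ $\mu$-a.e., hence $\Psi[\bar u]=\Psi[u]$ at every $y$, hence $\Phi[\bar u]=\Phi[u]=\bar u$ at every $x$. Your version merely spells out the underlying principle (that $\Phi$ depends only on the $\mu$-equivalence class of its argument) a bit more explicitly.
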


\begin{proof}
For any $y$,
\begin{align*}
\Psi[\bar u](y):=\IX p(x',y) \bar u(x') ^{ -1}\, \mu(dx')
	=\IX p(x',y)  u(x') ^{ -1}\, \mu(dx')
	=:\Psi[u](y),
\end{align*}
because $\bar u:=\Phi[u]=u,\ \mu\ae$ It follows that for all $x$,
\begin{align*}
\Phi[\bar u](x)
	:= \IY p(x,y) \Psi[\bar u](y) ^{ -1}\, \nu(dy)
	= \IY p(x,y) \Psi[ u](y) ^{ -1}\, \nu(dy)
	=:\Phi[u](x)=:\bar u(x),
\end{align*}
which is the announced result.
\end{proof}

\section{Solving Fortet's equation}\label{sec-main}

We call  equation \eqref{eq-02}:  $\Phi[u]=u,$ Fortet's equation.

\subsection*{Uniqueness of the positive solution to $\Phi[u]=u$}
This result is a corollary of Lemma \ref{res-01}.

\begin{proposition}[Uniqueness]\label{res-20}
Under the assumptions \eqref{eq-14} and \eqref{eq-15} and the additional requirement that
\begin{align}\label{eq-35}
\mu( \left\{x:p(x,y)>0,\forall y\right\})>0,
\end{align}
the fixed point equation $\Phi[u]=u$ admits at most one positive solution, up to the trivial transformation $u\rightarrow  \kappa u$ with $\kappa>0.$
\end{proposition}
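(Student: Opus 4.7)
\medskip\noindent\textbf{Proof plan.}
The strategy is to exploit the identity of Lemma \ref{res-01} — which forces $\int \Phi[u]/u\, d\mu = 1$ whenever $u$ is positive and dominated by a test function with finite $\Psi$ $\nu$-a.e. — together with the monotonicity and $1$-homogeneity of $\Phi$ and the irreducibility hypothesis \eqref{eq-35}. Given two positive solutions $u_1$ and $u_2$, the plan is to show that their pointwise maximum $w := u_1 \vee u_2$ is itself a fixed point of $\Phi$ ($\mu$-a.e.), then to use the positivity of $p$ along an ``irreducible'' fiber supplied by \eqref{eq-35} to force $w = u_1$ or $w = u_2$ $\mu$-a.e.

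First I would enlarge $U$ to $U' := U \vee u_1 \vee u_2$; by Lemma \ref{res-03} and \eqref{eq-15}, $\Psi[U'] \le \Psi[U] < \infty$ $\nu$-a.e., so Lemma \ref{res-01} applies to every $u \le U'$. Applied to $u = u_i$, whose positivity makes $\{\Phi[u_i]>0\}$ of full $\mu$-measure, it gives $1 = \int \Phi[u_i]/u_i\,d\mu = \nu(\Psi[u_i]<\infty)$, so $\Psi[u_i]<\infty$ $\nu$-a.e. Assume by contradiction that $u_1/u_2$ is not $\mu$-essentially constant; then some $t>0$ lies strictly between two essential values of $u_1/u_2$, and after replacing $u_2$ by $tu_2$ (still a positive fixed point by $1$-homogeneity) both $\mu(\{u_1>u_2\})>0$ and $\mu(\{u_2>u_1\})>0$. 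Setting $w:=u_1\vee u_2$, Lemma \ref{res-03} yields $\Psi[w]\le \Psi[u_i]<\infty$ $\nu$-a.e., while $\Phi[w]\ge \Phi[u_i]=u_i$ for each $i$, so $\Phi[w]\ge w>0$ and $\int \Phi[w]/w\,d\mu\ge 1$. Lemma \ref{res-01} reads simultaneously $\int \Phi[w]/w\,d\mu = \nu(\Psi[w]<\infty)\le 1$, and the two inequalities collapse into $\Phi[w]=w$, $\mu$-a.e.

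To derive the contradiction, let $A:=\{x:p(x,y)>0,\ \forall y\in\YY\}$; by \eqref{eq-35}, $\mu(A)>0$, and at least one of $A\cap\{u_1\ge u_2\}$ or $A\cap\{u_1<u_2\}$ has positive $\mu$-measure. In the first case, for $\mu$-a.e.\ $x$ in that set,
\[
0 \;=\; w(x)-u_1(x)\;=\;\Phi[w](x)-\Phi[u_1](x)\;=\;\int_{\YY} p(x,y)\bigl[\Psi[w]^{-1}(y)-\Psi[u_1]^{-1}(y)\bigr]\,\nu(dy),
\]
with integrand nonnegative by Lemma \ref{res-03} and $p(x,y)>0$ for every $y$ since $x\in A$; hence $\Psi[w]=\Psi[u_1]$ $\nu$-a.e., i.e.\ $\int p(x',y)[u_1^{-1}-w^{-1}](x')\,\mu(dx')=0$ for $\nu$-a.e.\ $y$. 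The integrand being nonnegative, a Fubini-Tonelli swap forces $p(x',\cdot)=0$ $\nu$-a.e.\ for $\mu$-a.e.\ $x'\in\{u_1<w\}$, which clashes with the standing assumption \eqref{eq-08}-(i) unless $\mu(\{u_1<w\})=0$; thus $u_2\le u_1$ $\mu$-a.e., contradicting $\mu(\{u_2>u_1\})>0$. The case $\mu(A\cap\{u_1<u_2\})>0$ is symmetric.

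Consequently $u_1/u_2=\kappa$ $\mu$-a.e.\ for some $\kappa>0$, and Lemma \ref{res-10} upgrades this to an everywhere equality via $u_1=\Phi[u_1]=\kappa\Phi[u_2]=\kappa u_2$. The main obstacle I anticipate is the final Fubini step transferring $\Psi[w]=\Psi[u_1]$ $\nu$-a.e.\ back to $u_1=w$ $\mu$-a.e.: the ``all-$y$-positive'' set $A$ from \eqref{eq-35} lives only on the $x$-side of the kernel, so the $x'$-side must be handled through the strictly weaker standing assumption \eqref{eq-08}-(i), which is why the argument proceeds by eliminating a subset of $\{u_1<w\}$ rather than by direct pointwise comparison.
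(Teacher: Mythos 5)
Your proof is correct and rests on the same core mechanism as the paper's: apply Lemma \ref{res-01} to the pointwise maximum $w = u_1 \vee u_2$ (whose ratio $\Phi[w]/w$ is $\ge 1$ by monotonicity) to force $\Phi[w]=w$ $\mu$-a.e., and then use the positivity of $p(x,\cdot)$ for $x$ ranging over the set $A$ supplied by \eqref{eq-35} to extract $\Psi[w]=\Psi[u_i]$ $\nu$-a.e.\ for one of the two solutions. Three details differ from the paper. First, by enlarging $U$ to $U' := U\vee u_1\vee u_2$ (which still has $\Psi[U']\le\Psi[U]<\infty$) you remove the paper's implicit assumption $u',u''\le U$ when applying Lemma \ref{res-01}; this is a genuine tightening. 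Second, the paper normalizes the two solutions to agree at a single point $x_o\in A$ at which also $\Phi[w](x_o)=w(x_o)$, whereas you normalize by choosing $t$ strictly between the essential bounds of $u_1/u_2$; your version avoids the slight awkwardness in the paper of needing a point $x_o$ that simultaneously witnesses the $\mu$-a.e.\ identity $\Phi[w]=w$ and belongs to $A$, \emph{after} the renormalization has changed $w$. Third, your finishing step is a detour: once $\Psi[w]=\Psi[u_1]$ $\nu$-a.e.\ is in hand, you swap a Fubini--Tonelli integral and invoke the standing convention \eqref{eq-08}-(i) to conclude $\mu(\{u_1<w\})=0$. The paper's route is shorter and does not need \eqref{eq-08}-(i) at this stage: since $\Phi$ depends on its argument only through $\Psi$, $\Psi[w]=\Psi[u_1]$ $\nu$-a.e.\ gives $\Phi[w]=\Phi[u_1]$ everywhere, and combining $\Phi[w]=w$ $\mu$-a.e.\ with $\Phi[u_1]=u_1$ yields $w=u_1$ $\mu$-a.e.\ directly. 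Both finishes are valid; yours is a bit longer but self-contained.
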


\begin{proof}
Let $u'$ and $u''$ be two positive solutions: $0<u'=\Phi[u']\le U$ and $0<u''=\Phi[u'']\le U.$ Consider their maximum: $u:=u'\vee u''.$ It satisfies $0<u\le U$ and by monotonicity of $\Phi,$
\begin{align*}
\Phi[u]\ge \Phi[u']\vee \Phi[u'']=u'\vee u''= u>0,
\end{align*}
that is: $\Phi[u]/u\ge 1.$
On the other hand, by Lemma \ref{res-01} 
\begin{align*}
\IX \frac{\Phi[u]}{u}\, d \mu=\int _{ \{\Phi[u]>0\}} \frac{\Phi[u]}{u}\, d \mu=
 \nu(\Psi[u]< \infty)\le 1,
\end{align*}
where we took advantage of $ \left\{\Phi[u]>0\right\} =\XX$ holding under out assumption \eqref{eq-35}.
It follows that 
\begin{align}\label{eq-27}
\Phi[u]=u,\ \mu\ae
\end{align}
and $\Psi[u]< \infty,\ \nu\ae$
\\
Pick an $x_o$ such that $\Phi[u](x_o)=u(x_o)$ and $p(x_o,y)>0, \forall y, \nu\ae$ Normalize $u'$ and $u''$ such that $u'(x_o)=u''(x_o)$ (this is possible since $\Phi$ is 1-homogeneous). Once this is done, we see that $u(x_o)=u'(x_o)=u''(x_o)=\Phi[u](x_o)=\Phi[u'](x_o)=\Phi[u''](x_o).$ 
\\
Since $u'\le u,$ we have $\Psi[u']\ge \Psi[u].$ Together with $$\IY p(x_o,y)\Psi[u](y) ^{ -1}\,\nu(dy)=\Phi[u](x_o)=\Phi[u'](x_o)=\IY p(x_o,y)\Psi[u'](y) ^{ -1}\,\nu(dy)$$ and the  positivity of $p(x_o,\cdot),$  this leads us to $\Psi[u]=\Psi[u'],\ \nu\ae,$ which in turns implies that $\Phi[u]=\Phi[u'],$ everywhere. This shows with \eqref{eq-27} that $u=u',\ \mu\ae$ By a similar reasoning, we also obtain $u=u'',\ \mu\ae,$ and it follows that $u'=u'',\ \mu\ae$ This implies  that $\Phi[u']=\Phi[u'']$ everywhere, which is the desired identity: $u'=u''.$ 
\end{proof}

\subsection*{Statement of the existence result}
The positivity of the fixed point $u_*$ requires additional assumptions beside \eqref{eq-14} and \eqref{eq-15}. This will be obtained at Theorem \ref{res-11} with $U=\1$ and under the

\begin{hypotheses}\label{hyp-01}\  
\begin{enumerate}[(a)]
\item
$p(x,y)>0,\ \forall x\in\XX, y\in\YY,$
\item
$\sup _{ x\in\XX, y\in\YY} p(x,y)< \infty,$

\item
$\IY [\IX p(x,y)\,\mu(dx)]^{ -1}\,\nu(dy)< \infty.$
\end{enumerate}
\end{hypotheses}

\begin{remarks}[About the Hypotheses \ref{hyp-01}]\ 
\begin{enumerate}[-]
\item
The irreducibility hypothesis (a) is a strengthening of convention \eqref{eq-08}. 

\item
Hypothesis (c) is: $\IY \Psi[\1](y) ^{ -1}\,\nu(dy)< \infty,$  with the function $U=\1.$
\item
The hypotheses (b) and (c) together imply the growth control assumption \eqref{eq-14}: $\Phi[\1](x)< \infty,\ \forall x.$ 
\end{enumerate}\end{remarks}

\begin{theorem}\label{res-11}
Under the Hypotheses \ref{hyp-01}, the limit $u_*$ of the scheme $(u_n)$ defined by \eqref{eq-03}  with $$U=\1,$$ 
is positive  and solves the fixed point equation  \eqref{eq-02}:
\begin{align*}
0< u_*=\Phi[u_*]\le 1,\quad \textrm{everywhere}.
\end{align*}
\end{theorem}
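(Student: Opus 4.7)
The plan is to verify that Hypotheses \ref{hyp-01} are strong enough for the preliminary results of Section \ref{sec-prelim}, extract the limit $u_*$, and then establish both that $u_*>0$ and that the subfixed point equation $u_*=\Phi[u_*]\wedge\1$ is in fact an equality. I first check the two integrability conditions with $U=\1$. The boundedness of $p$ gives $\Psi[\1](y)=\IX p(x,y)\,\mu(dx)\le \|p\|_\infty<\infty$ for every $y$, verifying \eqref{eq-15}. Coupling this with Hypothesis \ref{hyp-01}(c) yields $\Phi[\1](x)\le\|p\|_\infty\IY\Psi[\1]^{-1}\,d\nu<\infty$ for every $x$, verifying \eqref{eq-14}. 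Lemma \ref{res-04} then delivers a limit $u_*$ with $0\le u_*=\Phi[u_*]\wedge\1\le\Phi[u_*]\le\Phi[\1]<\infty$ $\mu$-a.e. Moreover, because $u_n\ge 1/n>0$ and $u_n\le\1$, Lemma \ref{res-01} applied to each $u_n$ produces the crucial identity $\IX\Phi[u_n]/u_n\,d\mu=1$ for every $n\ge 1$, thanks to $\Psi[u_n]\le n\Psi[\1]<\infty$ everywhere and $\Phi[u_n]>0$ everywhere (by the positivity of $p$).

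Granted the positivity of $u_*$ $\mu$-a.e., the rest of the argument parallels the informal sketch of Section \ref{sec-informal}. The subfixed point inequality $u_*\le\Phi[u_*]$ gives $\Phi[u_*]/u_*\ge 1$ on $\{u_*>0\}$, and Lemma \ref{res-01} applied to $u=u_*$ gives
\[
\int_{\{\Phi[u_*]>0\}}\Phi[u_*]/u_*\,d\mu=\nu(\Psi[u_*]<\infty)\le\nu(\YY)=1.
\]
Since the positivity of $p$ forces $\Phi[u_*]>0$ everywhere, the two integrals coincide and are squeezed to $1$, whence $\Phi[u_*]=u_*$ $\mu$-a.e. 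Lemma \ref{res-10} then promotes this to $\bar u:=\Phi[u_*]=\Phi[\bar u]$ everywhere on $\XX$, providing the desired positive finite solution.

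The heart of the matter is therefore the positivity of $u_*$. A cheap dichotomy first shrinks the problem: if $\mu(\{u_*=0\})>0$, then $u_*^{-1}\equiv+\infty$ on that set together with $p>0$ forces $\Psi[u_*]\equiv+\infty$ on $\YY$, hence $\Phi[u_*]\equiv 0$ and $u_*=\Phi[u_*]\wedge\1=0$ $\mu$-a.e. So only the trivial case $u_*\equiv 0$ $\mu$-a.e. must be excluded. Here I would combine the identity $\IX\Phi[u_n]/u_n\,d\mu=1$ with the uniform bound $\Phi[u_n]\le\Phi[\1]\in L^1(\mu)$ (the point where Hypothesis \ref{hyp-01}(c) is used decisively) and the structural observation that on $\{\Phi[u_n]\le 1\}$ the recursion yields $\Phi[u_{n+1}]/u_{n+1}\le 1$, since there either $u_{n+1}=\Phi[u_n]\ge\Phi[u_{n+1}]$ or $u_{n+1}=1/(n+1)>\Phi[u_n]\ge\Phi[u_{n+1}]$. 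Under the contradiction hypothesis $u_*\equiv 0$, one has $\Phi[u_n]\to 0$ pointwise $\mu$-a.e. and hence $\mu(\{\Phi[u_n]>1\})\to 0$ by dominated convergence against $\Phi[\1]$; the integral identity then forces $\Phi[u_{n+1}]/u_{n+1}\to 1$ in $L^1$-sense on its complement, i.e., the ratio does not actually decay, which must be shown to be incompatible with $u_n\downarrow 0$. Making this quantitative propagation of the ratio rigorous is Fortet's key trick and the technical core of the theorem.
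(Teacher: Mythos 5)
Your proposal is correct through the verification of the integrability conditions, the derivation of the subfixed point inequality, the reduction ``positivity of $u_*$ implies the fixed point identity'' via Lemma \ref{res-01}, and the observation that positivity of $p$ yields the dichotomy (this is the paper's Lemma \ref{res-13}). You also correctly identify that the crux is the positivity of $u_*$ and that this is where Hypothesis \ref{hyp-01}-(c) is decisive. But you do not actually close the argument; you openly defer the ``quantitative propagation of the ratio'' as the thing ``which must be shown,'' and the sketch you offer would not finish it.

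The specific gap is that working with $\mu$-a.e.\ or in-measure statements is too weak here. You establish $\mu(\{\Phi[u_n]>1\})\to 0$, but that does not produce a single index $n_o$ at which $\Phi[u_{n_o}]\le 1$ holds $\mu$-a.e.\ (or everywhere), which is exactly what is needed to activate your correct pointwise observation that $\Phi[u_n]\le 1\Rightarrow\Phi[u_{n+1}]/u_{n+1}\le 1$. Fortet's mechanism (the paper's Lemma \ref{res-09}) upgrades the ad absurdum hypothesis $\Phi[u_*](x_o)=0$ at a \emph{single} point to \emph{uniform} convergence $\sup_x\Phi[u_n](x)\to 0$: one splits $\YY$ into $\YY_k=\{p(x_o,\cdot)\ge 1/k\}$ and $\YY_k^c$, controls $\gamma_{u_n}(\YY_k)\le k\,\Phi[u_n](x_o)\to 0$ and $\gamma_{u_n}(\YY_k^c)\le\gamma_{\1}(\YY_k^c)\to 0$ using that $\gamma_{\1}=\Psi[\1]^{-1}\nu$ is a \emph{finite} measure (Hypothesis (c)) and $\YY_k^c\downarrow\emptyset$ by positivity of $p$ (Hypothesis (a)), then applies $\sup p<\infty$ (Hypothesis (b)). Uniform convergence yields an $n_o$ with $\Phi[u_{n_o}]\le\1$ everywhere; then $\Phi[u_{n_o+1}]/u_{n_o+1}\le 1$ $\mu$-a.e.\ together with the integral identity $\IX\Phi[u_{n_o+1}]/u_{n_o+1}\,d\mu=1$ forces $u_{n_o+1}=\Phi[u_{n_o+1}]$ $\mu$-a.e., and since $u_{n_o+1}\ge\1/(n_o+1)>0$, the scheme freezes at a positive fixed point (the paper's Lemma \ref{res-12}), contradicting $\Phi[u_*](x_o)=0$. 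Your $L^1$/in-measure reformulation is weaker and does not reach this conclusion; the uniformity is the missing idea.
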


The remainder of this section is dedicated to the proof of this result.

\subsection*{Sufficient conditions for $u_*=\Phi[u_*]$}
We prove that if $\Phi[u_*]$ is positive $\mu$-almost everywhere, then $u_*$ is a fixed point of $\Phi.$

\begin{lemma}\label{res-07}
Under the assumptions \eqref{eq-14} and \eqref{eq-15}, if 
\begin{align}\label{eq-13}
\Phi[u_*]>0,\quad \mu\ae,
\end{align}
then $\Psi[u_*]< \infty,\ \nu\ae,$ and \ 
$
0< u_*=\Phi[u_*]\le U,\ \textrm{everywhere.}
$
\end{lemma}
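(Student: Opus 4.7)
The plan is to apply Lemma \ref{res-01} at $u=u_*$, which is legitimate since Lemma \ref{res-04}(a) guarantees $0 \le u_* \le U$. This produces
\begin{align*}
\int_{\{\Phi[u_*]>0\}} \frac{\Phi[u_*]}{u_*}\,d\mu \;=\; \nu(\Psi[u_*]<\infty) \;\le\; 1.
\end{align*}
The companion piece of information comes from Lemma \ref{res-04}(c), which states $u_* = \Phi[u_*]\wedge U$ $\mu$-a.e.; in particular, on $\{\Phi[u_*]>0\}$ one has $u_*>0$ (because $U>0$) and $\Phi[u_*]/u_*\ge 1$.

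Under hypothesis \eqref{eq-13}, the set $\{\Phi[u_*]>0\}$ has full $\mu$-measure, so the left-hand side above is bounded below by $\mu(\XX)=1$. The chain $1 \le \int\cdots = \nu(\Psi[u_*]<\infty) \le 1$ then collapses to equalities, yielding two conclusions simultaneously: $\nu(\Psi[u_*]<\infty)=1$, which is the first assertion of the lemma, and $\Phi[u_*]/u_*=1$ $\mu$-a.e.\ on $\{\Phi[u_*]>0\}$. Since both $\Phi[u_*]$ and $u_*=\Phi[u_*]\wedge U$ vanish on the $\mu$-null complement, this extends to $\Phi[u_*]=u_*$ $\mu$-a.e.\ on all of $\XX$.

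The final step is to promote this $\mu$-a.e.\ identity into a pointwise one via Lemma \ref{res-10}: the function $\bar u:=\Phi[u_*]$ is defined on all of $\XX$ and obeys $\bar u=\Phi[\bar u]$ everywhere, so adopting $\bar u$ as the canonical representative of $u_*$ (capped by $U$ on the $\mu$-null exceptional set if needed, which does not disturb the equation on its full-$\mu$-measure support) yields $u_*=\Phi[u_*]\le U$ pointwise. Strict positivity everywhere follows from the explicit formula $u_*(x)=\int_\YY p(x,y)\Psi[u_*](y)^{-1}\,\nu(dy)$ together with $\Psi[u_*]^{-1}>0$ $\nu$-a.e.\ (since $\Psi[u_*]<\infty$ $\nu$-a.e.) and the irreducibility convention \eqref{eq-08}(i), which together force the integrand to be strictly positive on a set of positive $\nu$-measure for each $x$. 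The delicate point is the clean transition from the integral identity of Lemma \ref{res-01} (blind to $\mu$-null modifications of $u_*$) to the pointwise fixed-point equation; Lemma \ref{res-10} is engineered precisely for this upgrade, so the whole argument reduces to checking applicability of Lemmas \ref{res-01} and \ref{res-10} at $u=u_*$.
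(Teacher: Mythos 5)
Your proof is correct and follows essentially the same route as the paper's: combine the identity of Lemma \ref{res-01} at $u=u_*$ with the pointwise lower bound $\Phi[u_*]/u_*\ge 1$ on $\{\Phi[u_*]>0\}$ (which has full $\mu$-measure by \eqref{eq-13}) to force equality in the chain $1\le \nu(\Psi[u_*]<\infty)\le 1$, and then invoke Lemma \ref{res-10}. The paper reaches the same inequality $\Phi[u_*]/u_*\ge 1$ by an explicit two-case split ($\Phi[u_*]\le U$ vs.\ $\Phi[u_*]>U$), so your version is a mild streamlining of the same argument rather than a different approach.
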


\begin{proof}
By Lemma \ref{res-04}: $u_*=\Phi[u_*]\wedge  U,\ \mu\ae$
Let us investigate this identity in the following two cases.
\begin{enumerate}
\item[Case 1:] $\Phi[u_*](x)\le U(x).$  We have $u_*(x)=\Phi[u_*](x).$ Hence, $\Phi[u_*](x)>0$ implies that $ \displaystyle{ \frac{\Phi[u_*](x)}{u_*(x)}}=1.$

\item[Case 2:] $\Phi[u_*](x)> U(x).$  We have  $u_*(x)=U(x)<\Phi[u_*](x).$ Hence,  $ \displaystyle{ \frac{\Phi[u_*](x)}{u_*(x)}}>1.$
\end{enumerate}
Applying Lemma \ref{res-01} with $u_*$ and splitting $\IX$ into these disjoint cases, we obtain
\begin{align*}
\nu(\Psi[u_*]< \infty)	
	&= \int _{ \left\{\Phi[u_*]>0\right\} } \frac{\Phi[u_*]}{u_*}\, d \mu\\
	&= \int _{ \left\{0<\Phi[u_*]\le U\right\} } \frac{\Phi[u_*]}{u_*}\, d \mu
		+ \int _{ \left\{\Phi[u_*]>U\right\} } \frac{\Phi[u_*]}{u_*}\, d \mu\\
	&= \mu(\Phi[u_*]>0)
		+ \int _{ \left\{\Phi[u_*]>U\right\} } \underbrace{\Big(\frac{\Phi[u_*]}{u_*}-1\Big)}_{>0}\, d \mu.
\end{align*}
Supposing that
$	
\nu(\Psi[u_*]< \infty)	\le \mu(\Phi[u_*]>0),
$	
for instance if $ \mu(\Phi[u_*]>0)=1,$
we obtain: $\Phi[u_*]\le U,\ \mu\ae$ and $\nu(\Psi[u_*]< \infty)	= \mu(\Phi[u_*]>0).$ Since $\Phi[u_*]\le U,\ \mu\ae,$ we conclude with \eqref{eq-05} (see Lemma \ref{res-04}) that $u_*=\Phi[u_*],\ \mu\ae$ Finally, we obtain the "everywhere identity" with Lemma \ref{res-10}. 
\end{proof}

\subsection*{On the way to the positivity of $u_*$}

With Lemma \ref{res-07} at hand, it remains  to face the problem of the positivity of $\Phi[u_*],$ see \eqref{eq-13}. This will be done by  assuming the Hypotheses \ref{hyp-01}, and in particular that $p$ is positive everywhere.

We shall invoke identity \eqref{eq-21} in a while. Here is its rigorous  statement.

\begin{lemma}\label{res-08}
Under the assumptions \eqref{eq-14} and \eqref{eq-15},  any  $u:\XX\to(0, \infty)$ such that $c ^{ -1}U\le u\le c U$ for some $1\le c < \infty,$ satisfies
\begin{align*}
\IX \frac{\Phi[u]}{u}\, d\mu=1.
\end{align*}
In particular, $ \displaystyle{\IX \frac{\Phi[u_n]}{u_n}\, d\mu=1},$ for all $n\ge 1.$
\end{lemma}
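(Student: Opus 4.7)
The plan is to reduce to Lemma \ref{res-01} by exploiting the 1-homogeneity of the Fortet map, and then to remove the restriction to the set $\{\Phi[u]>0\}$ using that $u$ is strictly positive and the lower bound $u\ge c^{-1}U$.

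First I would record the scaling identity. For any $\lambda>0$, it is immediate from the definitions that $\Psi[\lambda u]=\lambda^{-1}\Psi[u]$, and hence $\Phi[\lambda u]=\lambda\,\Phi[u]$. Consequently the ratio $\Phi[u]/u$ is invariant under scaling of $u$ by a positive constant, and so is the event $\{\Psi[u]<\infty\}$.

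Next I would set $\tilde u:=u/c$. The hypothesis $c^{-1}U\le u\le cU$ then gives $0<\tilde u\le U$, so Lemma \ref{res-01} applies to $\tilde u$ (using \eqref{eq-15}) and yields
\begin{align*}
\int_{\{\Phi[\tilde u]>0\}}\frac{\Phi[\tilde u]}{\tilde u}\,d\mu = \nu(\Psi[\tilde u]<\infty).
\end{align*}
By the scaling observation, $\Phi[\tilde u]/\tilde u=\Phi[u]/u$ and $\{\Psi[\tilde u]<\infty\}=\{\Psi[u]<\infty\}$, so this is equivalent to
\begin{align*}
\int_{\{\Phi[u]>0\}}\frac{\Phi[u]}{u}\,d\mu = \nu(\Psi[u]<\infty).
\end{align*}

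Now I would check that $\Psi[u]<\infty,\ \nu\ae$ From $u\ge c^{-1}U$ we get $u^{-1}\le cU^{-1}$ pointwise, hence $\Psi[u]\le c\,\Psi[U]$, which is $\nu$-a.e.\ finite by assumption \eqref{eq-15}. Therefore $\nu(\Psi[u]<\infty)=1$. Finally, because $u>0$ everywhere, the integrand $\Phi[u]/u$ vanishes on $\{\Phi[u]=0\}$, so restricting the domain of integration to $\{\Phi[u]>0\}$ does not change the integral. This yields $\IX \Phi[u]/u\,d\mu=1$.

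For the "in particular" clause, Lemma \ref{res-04}(a) supplies the two-sided bound $U/n\le u_n\le U$, so the hypothesis is met with $c=n$, and the conclusion follows. I do not expect a genuine obstacle here: the identity is essentially the informal Fubini computation \eqref{eq-21}, and the only work is the bookkeeping that justifies passing it through Lemma \ref{res-01} on scales where all denominators are finite and nonzero.
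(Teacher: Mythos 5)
Your proof is correct and follows the same route as the paper: apply Lemma \ref{res-01}, use $u\ge c^{-1}U$ together with \eqref{eq-15} to obtain $\nu(\Psi[u]<\infty)=1$, and use $u>0$ to drop the restriction to $\{\Phi[u]>0\}$. The only addition is that you make explicit the 1-homogeneity scaling $u\mapsto u/c$ required to meet the hypothesis $0\le u\le U$ of Lemma \ref{res-01}; the paper's one-line proof (``direct application of Lemma \ref{res-01}'') leaves this reduction implicit.
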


\begin{proof}
It is a direct application of Lemma \ref{res-01}, once it is noted that \eqref{eq-14} with $u\le cU$ imply $ \left\{\Phi[u]< \infty\right\} =\XX,$ and \eqref{eq-15} with $u\ge U/c$ imply $ \left\{\Psi[u]< \infty\right\} =\YY.$
\end{proof}

\begin{lemma}\label{res-12}
If there exists some $n_o\ge 1$ such that  $\Phi[u _{ n_o}]\le U,$ then $0<u_*=\Phi[u _{ n_o+1}]=\Phi[u_*]$, everywhere on $\XX.$ 
\end{lemma}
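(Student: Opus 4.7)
The plan is to upgrade the hypothesis $\Phi[u_{n_o}]\le U$ to the identity $\Phi[u_{n_o}]=u_{n_o}$ holding $\mu$-a.e.\ by means of Lemma \ref{res-08}, and then propagate this to the pointwise limit $u_*$ via Lemma \ref{res-10} together with a short induction.

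First, the hypothesis $\Phi[u_{n_o}]\le U$ renders the operation $\wedge U$ in \eqref{eq-03} inactive at step $n_o$, so $u_{n_o+1}=U/(n_o+1)\vee\Phi[u_{n_o}]\ge\Phi[u_{n_o}]$; combined with $u_{n_o+1}\le u_{n_o}$ from \eqref{eq-04}, this yields the pointwise inequality $\Phi[u_{n_o}]\le u_{n_o}$. Since $U/n_o\le u_{n_o}\le U$, Lemma \ref{res-08} applies with $c=n_o$ and gives $\IX \Phi[u_{n_o}]/u_{n_o}\,d\mu=1$; the integrand lying in $[0,1]$ together with $\mu(\XX)=1$ forces $\Phi[u_{n_o}]=u_{n_o}$, $\mu$-a.e.

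Now set $\bar u:=\Phi[u_{n_o}]$. Lemma \ref{res-10} upgrades the a.e.\ identity to the pointwise relation $\bar u=\Phi[\bar u]$ on $\XX$. I would then prove by induction on $n\ge n_o$ that $\Phi[u_n]=\bar u$ everywhere. Given the inductive hypothesis $\Phi[u_n]=\bar u$, the bound $\bar u\le U$ keeps $\wedge U$ inactive, so $u_{n+1}=U/(n+1)\vee\bar u$; pointwise monotonicity of $\Phi$ (Lemma \ref{res-03} read as an everywhere statement, since its inputs are defined everywhere) then sandwiches $\bar u=\Phi[\bar u]\le\Phi[u_{n+1}]\le\Phi[u_n]=\bar u$. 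Hence, for every $n\ge n_o$ and every $x\in\XX$, $u_{n+1}(x)=\max(U(x)/(n+1),\bar u(x))$, and letting $n\to\infty$ (using $U<\infty$) gives $u_*=\bar u$ everywhere on $\XX$. This immediately yields $u_*=\Phi[u_{n_o+1}]$ (the $n=n_o+1$ instance of the induction) and $u_*=\Phi[u_*]$ (via $u_*=\bar u=\Phi[\bar u]$), both pointwise.

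Positivity of $u_*$ then follows from Hypotheses \ref{hyp-01}: with $U=\1$ and $\mu(\XX)=1$, the estimate $\Psi[u_{n_o}](y)\le n_o\Psi[\1](y)\le n_o\sup p<\infty$ holds for every $y\in\YY$, so $\Psi[u_{n_o}](y)^{-1}>0$ for every $y$; combined with $p>0$ everywhere and $\nu(\YY)=1$, this forces $\Phi[u_{n_o}](x)>0$ for every $x\in\XX$, i.e.\ $u_*=\bar u>0$. The main subtlety throughout is the bookkeeping between $\mu$-a.e.\ and everywhere statements, and Lemma \ref{res-10} is precisely the device that bridges the two for the fixed-point identity.
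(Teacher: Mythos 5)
Your proof is correct and follows essentially the same route as the paper's: derive $\Phi[u_n]\le u_n$ pointwise from the hypothesis and \eqref{eq-04}, combine with Lemma \ref{res-08} to get $\Phi[u_n]=u_n$ $\mu$-a.e., promote this to an everywhere fixed point $\bar u=\Phi[\bar u]$ via Lemma \ref{res-10}, and identify $u_*$ with $\bar u$ by controlling the tail of the scheme. The paper applies Lemma \ref{res-08} at index $n_o+1$ rather than $n_o$ and leaves the induction showing $u_*=\bar u$ implicit — your explicit induction is a useful clarification, not a different method. One small remark on the positivity step: you invoke $U=\1$ and $\sup p<\infty$ from Hypotheses \ref{hyp-01}, but Lemma \ref{res-12} is re-used verbatim in Proposition \ref{res-18}, where $U$ is a general positive function and $p$ need not be bounded; the paper's positivity argument relies only on $u_{n_o+1}\ge U/(n_o+1)>0$ together with \eqref{eq-15} (giving $\Psi[u_{n_o+1}]<\infty$ $\nu$-a.e.) and the convention \eqref{eq-08}-(i), so it remains valid in all the variants.
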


\begin{proof}
Because $\Phi[u_n]$ decreases, we see that 
\begin{align*}
u _{ n_o+1}:=U/(n_o+1)\vee \Phi[u _{ n_o}]\wedge U
	=U/(n_o+1)\vee \Phi[u _{ n_o}]
	\ge \Phi[u _{ n_o}]
	\ge \Phi[u _{ n_o+1}].
\end{align*}
Hence, $\Phi[u _{ n_o+1}]/ u _{ n_o+1}\le 1,\ \mu\ae$ But, by Lemma \ref{res-08}: $\IX \Phi[u _{ n_o+1}]/ u _{ n_o+1}\, d \mu=1.$ Therefore, $$u _{ n_o+1}=\Phi[u _{ n_o+1}],\ \mu\ae$$ Since $u _{ n_o+1}\ge U/(n_o+1)>0$ (this is the only place where $U/(n+1)\, \vee $ plays a role in the definition \eqref{eq-03} of $(u_n)$),  we conclude with Lemma \ref{res-10} that $0<u_*=\Phi[u _{ n_o+1}]=\Phi[u_*]$, everywhere on $\XX.$
\end{proof}

\begin{lemma}[Dichotomy] \label{res-13}
Assuming  that $p>0$ everywhere, for any function $0\le u\le U,$  either  $\Phi[u](x)=0,\ \forall x\in\XX$, or\quad $\Phi[u](x)>0,\ \forall x\in\XX.$
\end{lemma}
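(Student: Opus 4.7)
The plan is to exploit the positivity of $p$ to show that $\Phi[u](x)$ either vanishes for every $x$ or is strictly positive for every $x$, depending only on whether the set $A:=\{y\in\YY:\Psi[u](y)<\infty\}$ is $\nu$-negligible or not. The point is that the $x$-dependence of $\Phi[u](x)$ enters only through the factor $p(x,y)$, which is a strictly positive function of $y$ for every $x$; consequently, positivity/nullity of the integral is dictated by the $y$-integrand $\Psi[u](y)^{-1}$ alone.

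More precisely, by the convention on the undetermined product $0\times\infty$, we have
\begin{align*}
\Phi[u](x)=\int_{\{p(x,\cdot)>0\}} p(x,y)\,\Psi[u](y)^{-1}\,\nu(dy),
\end{align*}
and since $p(x,y)>0$ for every $(x,y)$, the domain of integration is all of $\YY$ for every $x$. By the convention on inverses, $\Psi[u](y)^{-1}>0$ exactly on $A$ and $\Psi[u](y)^{-1}=0$ on $A^{c}$. Thus
\begin{align*}
\Phi[u](x)=\int_{A} p(x,y)\,\Psi[u](y)^{-1}\,\nu(dy),\qquad \forall x\in\XX.
\end{align*}

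First I would treat the case $\nu(A)=0$: the display above shows immediately that $\Phi[u](x)=0$ for every $x\in\XX$. Otherwise $\nu(A)>0$, and then for every fixed $x\in\XX$, the integrand $p(x,y)\,\Psi[u](y)^{-1}$ is strictly positive on $A$ (both factors are $>0$ there); since $\nu(A)>0$, the integral is strictly positive, so $\Phi[u](x)>0$ for every $x\in\XX$. The two cases are exhaustive and mutually exclusive, which is the claimed dichotomy.

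There is really no substantial obstacle here; the only subtlety is bookkeeping with the extended-real conventions adopted at the beginning of Section~\ref{sec-prelim} (so that $\Psi[u](y)^{-1}=0$ when $\Psi[u](y)=\infty$ and $p(x,y)\cdot 0 = 0$), together with the hypothesis $p>0$ everywhere, which removes any $x$-dependence in the support of the integrand. Note that the bound $u\le U$ is not used; the dichotomy holds for any nonnegative $u$.
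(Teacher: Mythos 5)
Your proof is correct and is essentially the paper's argument, just phrased in the contrapositive direction: the paper supposes $\Phi[u](x_o)=0$ for some $x_o$ and uses $p(x_o,\cdot)>0$ to deduce $\Psi[u]^{-1}=0$ $\nu$-a.e.\ (i.e.\ $\nu(A)=0$ in your notation), whence $\Phi[u]\equiv0$; you instead split directly on whether $\nu(A)$ is zero or positive. Both rest on the same observation — that $p>0$ removes any $x$-dependence from the support of the integrand — and your closing remark that $u\le U$ is not actually used is accurate.
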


\begin{proof}
Suppose that there is some $x_o\in\XX$ such that  $\Phi[u](x_o)=0$. Then,   
\begin{align*}
0=\Phi[u](x_o)=\IY p(x_o,y)\Psi[u](y) ^{ -1}\,\nu(dy).
\end{align*}
But  $p>0$ implies that $\Psi[u] ^{ -1}= 0,\ \nu\ae$, which in turn implies that 
 $\Phi[u](x)=0,\ \forall x\in\XX.$
\end{proof}

To go further,  we need the additional Hypotheses \ref{hyp-01}.

\begin{lemma} \label{res-09}\ 
Under the  Hypotheses  \ref{hyp-01}, if there exists $x_o$ such that $\Phi[u_*](x_o)=0,$ then the sequence $(\Phi[u_n])$ converges uniformly to $\Phi[u_*]=0.$
\end{lemma}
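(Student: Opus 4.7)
The plan is to first collapse the hypothesis at a single point $x_o$ into a statement everywhere via the dichotomy lemma, then promote pointwise convergence of $\Phi[u_n]$ to $0$ to uniform convergence by a dominated convergence argument, using the full strength of Hypotheses \ref{hyp-01}.

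\textbf{Step 1.} Since we assume $p>0$ everywhere (Hypothesis \ref{hyp-01}-(a)) and $0\le u_*\le U=\1$, Lemma \ref{res-13} applied to $u_*$ gives $\Phi[u_*](x)=0$ for every $x\in\XX$. Fixing any $x\in\XX$, positivity of $p(x,\cdot)$ and the identity $\IY p(x,y)\Psi[u_*](y)^{-1}\,\nu(dy)=0$ force $\Psi[u_*]^{-1}=0$ $\nu\ae$, equivalently $\Psi[u_*]=\infty$ $\nu\ae$

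\textbf{Step 2.} Because $U=\1$ implies $u_n\le \1$, and hence $u_n^{-1}\ge \1$, the monotonicity of $\Psi$ (Lemma \ref{res-03}) yields
\begin{align*}
\Psi[u_n](y)\ \ge\ \IX p(x',y)\,\mu(dx')\ =\ \Psi[\1](y),\qquad y\in\YY,\ \nu\ae,
\end{align*}
so $\Psi[u_n]^{-1}\le \Psi[\1]^{-1}$. The function $\Psi[\1]^{-1}$ is $\nu$-integrable by Hypothesis \ref{hyp-01}-(c). On the other hand, Lemma \ref{res-04}-(c) ensures that $\Psi[u_n]$ increases $\nu\ae$ to $\Psi[u_*]=\infty$, hence $\Psi[u_n]^{-1}\downarrow 0$ $\nu\ae$ Dominated convergence then gives
\begin{align*}
\Lim n \IY \Psi[u_n](y)^{-1}\,\nu(dy)\ =\ 0.
\end{align*}

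\textbf{Step 3.} With $M:=\sup_{x,y}p(x,y)<\infty$ (Hypothesis \ref{hyp-01}-(b)), we estimate uniformly in $x$:
\begin{align*}
0\ \le\ \Phi[u_n](x)\ =\ \IY p(x,y)\Psi[u_n](y)^{-1}\,\nu(dy)\ \le\ M\IY \Psi[u_n](y)^{-1}\,\nu(dy).
\end{align*}
Combining with Step 2, the right-hand side tends to $0$ as $n\to\infty$, independently of $x$. Since $\Phi[u_*]\equiv 0$ by Step 1, this is the claimed uniform convergence $\Phi[u_n]\rightrightarrows \Phi[u_*]=0$.

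The only potential obstacle is the application of dominated convergence in Step 2; it is handled cleanly precisely because the choice $U=\1$ gives the uniform lower bound $u_n^{-1}\ge 1$ that transfers the integrability control of Hypothesis \ref{hyp-01}-(c) on $\Psi[\1]^{-1}$ to the whole sequence $(\Psi[u_n]^{-1})_n$. The boundedness of $p$ then bridges from $L^1(\nu)$-convergence to uniform convergence on $\XX$.
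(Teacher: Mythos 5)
Your proof is correct, and it takes a genuinely different route than the paper. The paper works entirely at the single point $x_o$: it uses dominated convergence to show $\Phi[u_n](x_o)\to 0$, then slices $\YY$ into $\YY_k=\{p(x_o,\cdot)\ge 1/k\}$ and its complement, controlling $\gamma_{u_n}(\YY_k)\le k\,\Phi[u_n](x_o)$ on the first piece and $\gamma_{u_n}(\YY_k^c)\le\gamma_{\1}(\YY_k^c)$ on the second, where $\gamma_u(dy):=\Psi[u](y)^{-1}\nu(dy)$; positivity of $p$ enters only to make $\YY_k^c\downarrow\emptyset$, and Hypothesis \ref{hyp-01}-(c) enters as finiteness of the measure $\gamma_{\1}$. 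You instead invoke the dichotomy Lemma \ref{res-13} to upgrade the assumption at $x_o$ to $\Phi[u_*]\equiv0$, deduce $\Psi[u_*]=\infty$ $\nu$-a.e., and then apply dominated convergence globally on $\YY$, with $\Psi[\1]^{-1}\in L^1(\nu)$ (Hypothesis \ref{hyp-01}-(c)) as the dominating function, getting $\IY\Psi[u_n]^{-1}d\nu\to0$ in one stroke; the factor $\sup p$ then does the rest. Your argument is shorter, avoids the $\YY_k$ slicing entirely, and makes more transparent use of a lemma already available in the paper. One small presentational point: the fact that $\Psi[u_n]\uparrow\Psi[u_*]$ $\nu$-a.e. is established inside the \emph{proof} of Lemma \ref{res-04}-(c) rather than in its statement, so it would be cleaner to cite it as such (or simply invoke monotone convergence directly, which is all that is needed there).
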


\begin{proof}
Denote $ \gamma_u(dy):= \Psi[u](y) ^{ -1}\, \nu(dy)$, so that
$\Phi[u](x_o)=\IY p(x_o,y)\, \gamma_u(dy).$ By monotonicity of $(u_n)$ and $\Psi$, 
\begin{align*}
0\le \gamma _{ u _{ n+1}}\le \gamma _{ u_n} \le \gamma _{ \1},
\quad \forall n\ge1.
\end{align*}
Let $x_o$ be such that $\Phi[u_*](x_o)=0.$ Then, $0=\Lim n \Phi[u_n]( x_o)
=\Lim n \IY p(x_o,y)\, \gamma _{ u_n}(dy).$
\\
For any $k\ge 1,$ define $\YY_k:= \left\{p(x_o,\cdot)\ge 1/k\right\} .$
 Since $\IY p(x_o,\cdot) \, d \gamma _{ \1}=:\Phi[\1](x_o)< \infty,$ by dominated convergence
\begin{align*}
\Lim n \gamma _{ u_n}(\YY_k)\le k \Lim n \IY p(x_o,\cdot)\, d \gamma _{ u_n}=0,\quad \forall k\ge 1.
\end{align*}
On the other hand,  Hypothesis \ref{hyp-01}-(a) implies that the sequence $(\YY_k^c)$ decreases  to the empty set, and Hypothesis \ref{hyp-01}-(c) states that $ \gamma _{ \1}$ is a finite measure. It follows that  $\Lim k \gamma _{ \1}(\YY_k^c)=0$. We see with these considerations and the Hypothesis \ref{hyp-01}-(b): $\sup p< \infty,$ that  
\begin{align*}
\sup_x \Phi[u_n](x)
	=\sup_x \IY p(x,y)\, \gamma _{ u_n}(dy)
	\le \sup p\  [\gamma _{ u_n}(\YY_k)+ \gamma _{ \1}(\YY_k^c)]
\end{align*}
can be made arbitrarily small for $k$ and (then)  $n$ large enough.
\end{proof}

We are ready to complete the proof of the main result.

\subsection*{Completion of the proof of Theorem \ref{res-11}}

Note that the Hypotheses \ref{hyp-01} imply $\Psi[U]< \infty$ and $\Phi[U]< \infty$ with $U=\1.$ 
\\
By Lemma \ref{res-07}, it is sufficient to prove that $\Phi[u_*]>0$ everywhere. Let us do it ad absurdum. 
\\
Suppose that $\Phi[u_*](x_o)=0,$ for some $x_o.$   We know with Lemma \ref{res-09} that $\Phi[u_n]$ converges uniformly to zero. Therefore, there exists some $n_o\ge 1$ such that  $\Phi[u _{ n_o}]\le \1.$ Applying Lemma \ref{res-12}, we obtain:   $0<u_*=\Phi[u _{ n_o+1}]=\Phi[u_*]$, everywhere on $\XX,$ which contradicts our assumption that $\Phi[u_*](x_o)=0$ for some $x_o$, and completes the proof of  Theorem \ref{res-11}. 

\section{A first variant}

Observe that Lemma \ref{res-09} is the only place where other assumptions than  \eqref{eq-14} and \eqref{eq-15} are required. Any variant of Fortet's proof must focus on an analogue of this lemma. 

As in the Hypotheses \ref{hyp-01}-(a) and (b) of Theorem \ref{res-11}, $p$ is assumed to be positive and upper bounded, but 
Hypothesis \ref{hyp-01}-(c) is replaced  by the Hypotheses \ref{hyp-02}.

\begin{proposition}\label{res-17}
Suppose as in Theorem \ref{res-11} that $p$ is positive and bounded. Under  the additional Hypotheses \ref{hyp-02}, the limit $u_*$ of the scheme $(u_n)$ defined by \eqref{eq-03}  with $U=\1,$
is positive  and solves the fixed point equation  \eqref{eq-02}:
$
0< u_*=\Phi[u_*]\le 1, \textrm{everywhere}.
$
\end{proposition}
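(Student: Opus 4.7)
The plan is to follow the proof of Theorem \ref{res-11} verbatim, with $U = \1$, and replace only the uniform-convergence step of Lemma \ref{res-09} by an argument tailored to Hypotheses \ref{hyp-02}. As a preliminary, I observe that boundedness of $p$ together with $\mu(\XX)=1$ gives $\Psi[\1] < \infty$ on $\YY$, so \eqref{eq-15} holds; and Hypothesis \ref{hyp-02}(b) applied to the fixed reference points $x_j$ controls $\Phi[\1]$ on $K$ via $\Phi[\1](x) \le \sum_j c_j \Phi[\1](x_j)$, so \eqref{eq-14} holds provided $\Phi[\1](x_j)<\infty$ (this is part of the ambient set-up, and is compatible with the uniform integrability remark following Hypothesis \ref{hyp-02}). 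With \eqref{eq-14} and \eqref{eq-15} in hand, Lemmas \ref{res-04}, \ref{res-07}, \ref{res-08}, \ref{res-12} and \ref{res-13} all apply unchanged; as in the proof of Theorem \ref{res-11}, it suffices to show that $\Phi[u_*]$ does not vanish anywhere on $\XX$.

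I would argue this by contradiction. Suppose $\Phi[u_*](x_o)=0$ for some $x_o$. The dichotomy of Lemma \ref{res-13} (using $p>0$) forces $\Phi[u_*]\equiv 0$ on $\XX$, so in particular $\Phi[u_*](x_j)=0$ at each of the finitely many points $x_j$ of Hypothesis \ref{hyp-02}(b). By the pointwise monotone convergence of Lemma \ref{res-04}(c)(ii), $\Phi[u_n](x_j) \to 0$ for each $j$, and the domination of Hypothesis \ref{hyp-02}(b) then yields
\begin{align*}
\sup_{x \in K} \Phi[u_n](x) \le \sum_{j=1}^J c_j \Phi[u_n](x_j) \to 0 \quad \text{as } n\to\infty,
\end{align*}
i.e.\ $\Phi[u_n]\to 0$ uniformly on $K$. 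This is the exact substitute for Lemma \ref{res-09}: I choose $n_o$ with $\Phi[u_{n_o}] \le 1 = U$ on $K$, and then invoke Lemma \ref{res-12} (together with Lemma \ref{res-10} to pass from the resulting $\mu$-almost everywhere fixed-point identity to an everywhere one) to obtain a strictly positive fixed point of $\Phi$, contradicting $\Phi[u_*]\equiv 0$.

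The main obstacle is the gap between the bound $\Phi[u_{n_o}] \le U$ that my argument produces only on $K$ and the everywhere bound demanded by Lemma \ref{res-12} as stated. Closing this gap is where Hypothesis \ref{hyp-02}(a), the continuity of $p(\cdot,y)$ on $K$, must enter: I expect it to be used either to propagate enough regularity of the iterates from $K$ to a $\mu$-essential set where the identity $u=\Phi[u]$ can be established, or to localize the argument of Lemma \ref{res-12} to $K$ (leveraging $\supp\mu=\XX$ from Proposition \ref{res-06}) before re-extending to all of $\XX$ via Lemma \ref{res-10}. This technical bridge is the delicate point and the step I expect to require the most care.
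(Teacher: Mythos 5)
Your argument breaks at exactly the point you flag, and the gap is fatal rather than technical. Hypothesis \ref{hyp-02}(b) only gives $\sup_{x\in K}\Phi[u_n](x)\le\sum_j c_j\Phi[u_n](x_j)$, so the uniform convergence $\Phi[u_n]\to 0$ you obtain is a statement \emph{on $K$ only}. Lemma \ref{res-12} genuinely needs $\Phi[u_{n_o}]\le U$ on all of $\XX$: its proof drops the $\wedge\,U$ in the definition of $u_{n_o+1}$, deduces $\Phi[u_{n_o+1}]/u_{n_o+1}\le1$ pointwise, and then invokes the integral identity of Lemma \ref{res-08}. If the pointwise inequality fails on a set of positive $\mu$-measure in $K^c$, the conclusion $u_{n_o+1}=\Phi[u_{n_o+1}]$ is lost. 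Hypothesis \ref{hyp-02} gives no control whatsoever on $\{\Phi[u_n]\ge 1\}\cap K^c$, and continuity of $p(\cdot,y)$ on $K$ (Hypothesis \ref{hyp-02}(a)) cannot ``propagate'' the bound outward. There is also a smaller unproved assumption: you take $\Phi[\1](x_j)<\infty$ as ambient, but nothing in Hypotheses \ref{hyp-02} asserts it; under \ref{hyp-02} alone, \eqref{eq-14} is not free.

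The paper's actual route is quite different and is designed precisely to close this gap. It first \emph{twists} the transition density: one picks a continuous positive $\rho$ with $\int\rho\,d\mu=1$ and, crucially, $\{\rho\ge1\}\subset K$, plus an integrability condition that lets Theorem \ref{res-11} solve the auxiliary Schrödinger system with first marginal $\rho\,\mu$, producing $(\bar\aa,\bar\bb)$. Setting $\tilde p(x,y):=(d\bar\aa/d\mu)(x)\,p(x,y)$ and using Proposition \ref{res-14} to transfer solvability, Lemma \ref{res-24} shows $\tilde\Phi[\1]=\rho$. Since the iterates decrease, $A_n:=\{\tilde\Phi[\tilde u_n]\ge1\}\subset\{\rho\ge1\}\subset K$ for every $n$; this is the whole point of the twist — it \emph{forces} the bad set inside $K$. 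Then continuity of $\tilde\Phi[\tilde u_n]$ on $K$ (which uses both \ref{hyp-02}(a) and the uniform-integrability consequence of \ref{hyp-02}(b)) makes each $A_n$ compact, and the finite-intersection property gives the dichotomy of Lemma \ref{res-16}: either some $A_{n_o}=\emptyset$, in which case $\tilde\Phi[\tilde u_{n_o}]<\1$ \emph{everywhere} and Lemma \ref{res-12} applies legitimately, or $\cap_n A_n\ne\emptyset$, which directly provides a point where $\tilde\Phi[\tilde u_*]\ge1>0$. Your sketch omits the twisting construction entirely, and without it the argument cannot be completed from the stated hypotheses.
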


The remainder of this section is dedicated to the proof of Proposition \ref{res-17}. We essentially follow Fortet.

\subsection*{Twisting the transition density}\label{twist}

Next result is merely a remark. Nevertheless we state it as a proposition because it gives a way to obtain a variant of Theorem \ref{res-11}.
Let us write $(\aa,\bb)\in\sol (p,\mu,\nu)$ to state that $(\aa,\bb)$ solves the Schrödinger system \eqref{eq-01}.

\begin{proposition} \label{res-14}
Consider the new transition density
\begin{align*}
\tilde p(x,y):= \alpha(x) \beta(y) p(x,y),
\qquad x\in\XX, y\in\YY,
\end{align*}
where $ \alpha:\XX\to (0, \infty)$ and $ \beta:\YY\to (0, \infty)$ are positive. Then,
\begin{align*}
(\aa,\bb)\in\sol(p,\mu,\nu)\iff (\tilde \aa, \tilde \bb):=( \alpha ^{ -1}\aa, \beta ^{ -1}\bb)\in\sol(\tilde p,\mu,\nu).
\end{align*}
\end{proposition}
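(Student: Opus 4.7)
The plan is to prove the equivalence by direct substitution, showing that each equation of the Schrödinger system for $(p, \mu, \nu)$ written in terms of $(\aa, \bb)$ becomes the corresponding equation of the system for $(\tilde p, \mu, \nu)$ written in terms of $(\tilde \aa, \tilde \bb)$ after the positive factors $\alpha$ and $\beta$ cancel. Concretely, I would first write out the definition $\tilde \aa(dx) = \alpha(x)^{-1}\aa(dx)$, $\tilde \bb(dy) = \beta(y)^{-1}\bb(dy)$, and $\tilde p(x,y) = \alpha(x)\beta(y)p(x,y)$, observing that everything is well-defined because $\alpha, \beta > 0$ everywhere (so the inverses are finite and the measures $\tilde\aa, \tilde\bb$ are $\sigma$-finite iff $\aa, \bb$ are).

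Next, I would verify the first equation of \eqref{eq-01} for $(\tilde\aa, \tilde\bb, \tilde p)$ by computing
\begin{align*}
\tilde \aa(dx) \IY \tilde p(x,y)\,\tilde \bb(dy)
&= \alpha(x)^{-1}\aa(dx) \IY \alpha(x)\beta(y)p(x,y)\,\beta(y)^{-1}\bb(dy)\\
&= \aa(dx)\IY p(x,y)\,\bb(dy),
\end{align*}
where the $\alpha(x)$ factor comes out of the $y$-integral and cancels against $\alpha(x)^{-1}$, and $\beta(y)\beta(y)^{-1} = 1$ inside the integrand. Thus this equals $\mu(dx)$ iff the corresponding equation for $(\aa, \bb, p)$ holds. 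A symmetric computation handles the second equation. The equivalence is then both directions simultaneously, since the map $(\aa,\bb) \mapsto (\alpha^{-1}\aa, \beta^{-1}\bb)$ is an involutive-type bijection with inverse $(\tilde\aa, \tilde\bb) \mapsto (\alpha \tilde\aa, \beta \tilde\bb)$.

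There is essentially no obstacle here: the statement is purely algebraic and the positivity of $\alpha, \beta$ means no measurability or integration convention is at risk. The only minor care is to justify pulling $\alpha(x)$ out of the $y$-integral (it does not depend on $y$) and pulling $\beta(y)$ out of the $x$-integral in the symmetric computation; both are immediate from Fubini-Tonelli applied to nonnegative integrands. The proof is therefore a two-line verification for each equation of the system.
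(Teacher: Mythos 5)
Your proof is correct and matches the paper's in substance: both rest on the single cancellation $\tilde\aa(dx)\,\tilde p(x,y)\,\tilde\bb(dy)=\aa(dx)\,p(x,y)\,\bb(dy)$, with the paper stating the identity of product measures once and noting the marginals are preserved, while you unpack the same cancellation equation-by-equation in \eqref{eq-01}.
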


\begin{proof}
Setting  $(\tilde \aa, \tilde \bb):=( \alpha ^{ -1}\aa, \beta ^{ -1}\bb),$ we observe that the measure
\begin{align*}
\tilde \aa(dx)\tilde p(x,y)\tilde \bb(dy)
	=\aa(dx)p(x,y)\bb(dy)
\end{align*}
has the desired product form and marginal measures $(\mu,\nu).$ 
\end{proof}

As a consequence, when proving that $(\aa,\bb)$ solves \eqref{eq-01}, it is enough to show that $(\tilde \aa,\tilde \bb)\in\sol(\tilde p,\mu,\nu)$  for some well chosen $\tilde p,$ see Corollary \ref{res-15} below.
Define the mapping
\begin{align*}
\tilde \Phi[u](x)
	:= \IY \tilde p(x,y) \left[ \IX \tilde p(x',y)u(x') ^{ -1} \,\mu(dx')\right] ^{ -1} \nu(dy),
\end{align*}
which is the analogue of $\Phi$ where $p$ is replaced by $\tilde p.$   Consider  also the scheme $(\tilde u_n)$ defined by \eqref{eq-03} with $U=\1$ and $\tilde \Phi$ instead of $\Phi.$

\begin{corollary}\label{res-15}
For the fixed point equation \eqref{eq-06} to admit a positive solution, it is sufficient to find a transition density $\tilde p$ as in Proposition \ref{res-14} such that
\begin{align}\label{eq-22}
\left\{
\begin{array}{lll}
(a) &\tilde \Psi[\1](y):=\IX \tilde p(x,y)\,\mu(dx) < \infty,\quad &\forall y\in\YY, \ \nu\ae\\
(b) &\tilde \Phi[\1](x) := \IY \tilde p(x,y) \tilde\Psi[\1](y) ^{ -1}\,\nu(dy)< \infty,
\quad &\forall x\in\XX,\ \mu\ae
\end{array}
\right.
\end{align}
and
\begin{align}\label{eq-23}
\tilde\Phi[\tilde u_*](x_o)>0,\quad \textrm{for some }x_o.
\end{align}
\end{corollary}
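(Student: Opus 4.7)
The plan is to transport the entire existence machinery of Section \ref{sec-main} to the twisted density $\tilde p$ in place of $p$, obtain a positive fixed point $\tilde u_*$ of $\tilde\Phi$, and then pull it back to a positive fixed point $u_*$ of $\Phi$ using the symmetry of Proposition \ref{res-14}.

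First, observe that conditions \eqref{eq-22}(a) and \eqref{eq-22}(b) are exactly the conditions \eqref{eq-15} and \eqref{eq-14} for the operator $\tilde\Phi$ at $U=\1$. Consequently, Lemma \ref{res-04} applied with $\tilde \Phi$ shows that the scheme $(\tilde u_n)$ is decreasing, its pointwise limit $\tilde u_*$ exists and satisfies
\begin{align*}
0\le \tilde u_*=\tilde\Phi[\tilde u_*]\wedge\1\le \tilde\Phi[\tilde u_*]\le \tilde\Phi[\1]<\infty,\quad \mu\ae
\end{align*}
Since $p>0$ everywhere (the standing assumption in the section) and $\alpha,\beta>0$, the twisted density $\tilde p$ is positive everywhere. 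The dichotomy Lemma \ref{res-13}, applied to $\tilde p$ and $\tilde u_*$, together with assumption \eqref{eq-23} that $\tilde\Phi[\tilde u_*](x_o)>0$ for some $x_o$, therefore forces $\tilde\Phi[\tilde u_*](x)>0$ for every $x\in\XX$.

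Feeding this positivity into Lemma \ref{res-07} (applied to $\tilde\Phi$), I obtain
\begin{align*}
0<\tilde u_*=\tilde\Phi[\tilde u_*]\le\1,\qquad \textrm{everywhere on }\XX.
\end{align*}
Finally, set $u_*:= \alpha^{-1}\tilde u_*$, which is positive everywhere. A direct computation using $\tilde p(x,y)=\alpha(x)\beta(y)p(x,y)$ gives $\tilde\Psi[\tilde u_*](y)=\beta(y)\Psi[u_*](y)$, and hence
\begin{align*}
\Phi[u_*](x)=\alpha(x)^{-1}\IY \tilde p(x,y)\tilde\Psi[\tilde u_*](y)^{-1}\,\nu(dy)=\alpha(x)^{-1}\tilde\Phi[\tilde u_*](x)=\alpha(x)^{-1}\tilde u_*(x)=u_*(x),
\end{align*}
so that $u_*$ is a positive solution of the fixed point equation \eqref{eq-06}, as required.

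The only nontrivial step is the single-point positivity hypothesis \eqref{eq-23}, which the dichotomy then amplifies to everywhere positivity. This is exactly the place where, in the proof of Theorem \ref{res-11}, the uniform convergence Lemma \ref{res-09} intervenes (via the contradiction in the final paragraph of Section \ref{sec-main}). The value of Corollary \ref{res-15} is precisely that it isolates this single obstacle: to produce a variant of the existence theorem, one only needs to exhibit a twist $(\alpha,\beta)$ ensuring the integrability conditions \eqref{eq-22} together with the one-point positivity \eqref{eq-23}. I expect this is exactly the maneuver that will be exploited in the proof of Proposition \ref{res-17} under the compactness-type Hypotheses \ref{hyp-02}.
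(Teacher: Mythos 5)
Your proof is correct and takes essentially the same route as the paper: apply Lemma \ref{res-04} and Lemma \ref{res-07} to the twisted operator $\tilde\Phi$ with $U=\1$ (noting that \eqref{eq-22}(a),(b) are precisely \eqref{eq-15},\eqref{eq-14} for $\tilde\Phi$), and use the dichotomy Lemma \ref{res-13} to upgrade \eqref{eq-23} to everywhere positivity. The only cosmetic difference is at the end, where you pull back to $\Phi$ via the explicit computation $u_*=\alpha^{-1}\tilde u_*$ rather than invoking Proposition \ref{res-14}; this is a slightly more direct verification at the level of the fixed-point operator, but it is the same mechanism.
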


\begin{proof}
One can apply Lemmas \ref{res-04} and \ref{res-07} with $U=\1,$ provided that 
\eqref{eq-22} is satisfied. 
If this holds, thanks to these lemmas and  Lemma \ref{res-13}, to obtain that $\tilde u_*:=\Lim n\tilde u_n$ is a positive fixed point of $\tilde\Phi:$ i.e.\ $0< \tilde u_*=\tilde\Phi[\tilde u_*],$  it is sufficient to prove  \eqref{eq-23}. We conclude with Proposition \ref{res-14}.
\end{proof}

\subsection*{A clever twisting}

Fix some compact subset $K\subset \XX$ and pick a function $ \rho$ on $\XX$ such that:
\begin{enumerate}[(i)]
\item
$ \rho:\XX\to(0, \infty)$ is positive, continuous and 
$\IX \rho(x)\, \mu(dx)=1;$
\item
$ \left\{ \rho\ge 1\right\} \subset K;$
\item
$\IX \left[ \IY p(x,y')\, \nu(dy')\right] ^{ -1}\, \rho(x)\, \mu(dx)< \infty.$
\end{enumerate}
Because of convention \eqref{eq-08}-(i), such a function  exists.
By a direct application of Theorem \ref{res-11}, we know that there exists a solution $(\bar \aa, \bar\bb)$ to the Schrödinger system
\begin{align*}
\left\{
\begin{array}{ll}
 \displaystyle{\bar \aa(dx) \IY p(x,y)\, \bar \bb(dy)}&=\ \rho(x)\,\mu (dx),\\ \\
\displaystyle{\bar \bb(dy)\IX p(x,y)\, \bar \aa(dx)} &=\ \nu (dy).
\end{array}
\right.
\end{align*}
Consider the twisted  transition density 
\begin{align*}
\tilde p(x,y):= \frac{d\bar \aa}{d \mu}(x) p(x,y).
\end{align*}
Note that   convention  \eqref{eq-07} and $ \rho>0 $ imply:  $\mu\ll \bar \aa\ll\mu$ and $\nu\ll\bar\bb\ll\nu.$

\begin{lemma}\label{res-24}
With this choice of $\tilde p,$ the estimates  \eqref{eq-22} are satisfied and 
\begin{align}\label{eq-24}
\tilde \Phi[\1]= \rho.
\end{align}
\end{lemma}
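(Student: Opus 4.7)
My plan is a direct calculation that unfolds the definition of $\tilde p$ and then reads off the two Schrödinger equations for $(\bar\aa,\bar\bb)$ inside $\tilde\Psi[\1]$ and $\tilde\Phi[\1]$, one after the other. The whole statement is essentially tautological once one notices that the factor $d\bar\aa/d\mu$ built into $\tilde p$ is designed precisely to convert $\mu$-integrals into $\bar\aa$-integrals.

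First I would compute $\tilde\Psi[\1](y)$. The factor $d\bar\aa/d\mu$ coming from $\tilde p$ turns the outer $\mu(dx)$ into $\bar\aa(dx)$, giving
\begin{align*}
\tilde\Psi[\1](y) = \IX p(x,y)\,\bar\aa(dx).
\end{align*}
The second equation of the Schrödinger system for $(\bar\aa,\bar\bb)$ identifies this right-hand side with $(d\nu/d\bar\bb)(y)$. Because the remark preceding the lemma gives $\nu\ll\bar\bb\ll\nu$, this Radon-Nikodym derivative is positive and finite $\nu$-a.e., which is exactly \eqref{eq-22}-(a). Inverting yields $\tilde\Psi[\1]^{-1} = d\bar\bb/d\nu$, $\nu$-a.e.

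Substituting this into $\tilde\Phi[\1]$, the same trick converts $\nu(dy)$ into $\bar\bb(dy)$ and produces
\begin{align*}
\tilde\Phi[\1](x) = \frac{d\bar\aa}{d\mu}(x)\IY p(x,y)\,\bar\bb(dy).
\end{align*}
The first equation of the Schrödinger system for $(\bar\aa,\bar\bb)$ asserts precisely that this right-hand side equals $\rho(x)$ for $\mu$-almost every $x$, which simultaneously delivers \eqref{eq-22}-(b) (as $\rho$ is finite everywhere by construction) and the identity \eqref{eq-24}.

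The main obstacle, such as it is, is purely measure-theoretic bookkeeping: one must check that $d\bar\aa/d\mu$ and $d\bar\bb/d\nu$, together with their reciprocals, are all finite and positive almost everywhere, so that the above rewritings are legitimate and no undetermined $0\cdot\infty$ product arises. This is granted by the two-sided absolute continuity statements $\mu\ll\bar\aa\ll\mu$ and $\nu\ll\bar\bb\ll\nu$ announced just before the lemma, themselves consequences of convention \eqref{eq-07} combined with the strict positivity of $\rho$.
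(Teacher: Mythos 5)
Your proof is correct and follows essentially the same route as the paper: unfold $\tilde p$ to rewrite $\tilde\Psi[\1]$ as an integral against $\bar\aa$, then apply the two Schrödinger equations for $(\bar\aa,\bar\bb)$ in turn. The only cosmetic difference is in verifying \eqref{eq-22}-(a): you identify $\tilde\Psi[\1]$ with $d\nu/d\bar\bb$ directly, whereas the paper integrates $\tilde\Psi[\1]$ against $\bar\bb$ to get $\nu(\YY)=1<\infty$ and deduce finiteness $\bar\bb$-a.e.; both rest on the same equation and are equally valid.
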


\begin{proof}
We have: $\tilde \Psi[\1](y)=\IX  p(x,y)\,\bar \aa(dx).$ As $\IY\tilde\Psi[\1](y)\, \bar \bb(dy)=\nu(\YY)=1< \infty,$ we see that $\tilde\Psi[\1]$ is finite $\bar \bb\ae$, hence  $ \nu\ae$, which is \eqref{eq-22}-(a).
On the other hand, \eqref{eq-22}-(b) is also satisfied because for any $x,$
\begin{align*}
\tilde\Phi[\1](x)
	&= \frac{d\bar\aa}{d \mu}(x) 
		\IY p(x,y) \left[ \IX p(x',y) \, {\bar\aa}(dx')\right] ^{ -1} \nu(dy)
	= \frac{d\bar\aa}{d \mu}(x)
		\IY p(x,y)\,\bar \bb(dy)\\
	&= \rho(x)< \infty,
\end{align*}
which completes the proof.
\end{proof}

\subsection*{Completion of the proof of Proposition \ref{res-17}}
One more preliminary result is necessary.

\begin{lemma}\label{res-16}
For \eqref{eq-23} to be verified, it is sufficient that
\begin{align}\label{eq-25}
\tilde\Phi[\tilde u_n] \textrm{ is continuous on }K ,\quad \textrm{for all } n\ge 1.
\end{align}
\end{lemma}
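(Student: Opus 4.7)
To prove Lemma \ref{res-16}, the plan is to argue by contradiction: assume that \eqref{eq-23} fails, so that $\tilde\Phi[\tilde u_*](x)=0$ for every $x\in\XX$, and derive a contradiction by combining Dini's theorem on $K$ with the twisted analog of Lemma \ref{res-12}.

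First, by the monotonicity of the scheme and of $\tilde\Phi$ (Lemma \ref{res-03}), the sequence $(\tilde\Phi[\tilde u_n])$ decreases pointwise to $\tilde\Phi[\tilde u_*]\equiv 0$. Under assumption \eqref{eq-25} each $\tilde\Phi[\tilde u_n]$ is continuous on the compact set $K$, and the zero function is trivially continuous, so Dini's theorem yields $\sup_{x\in K}\tilde\Phi[\tilde u_n](x)\to 0$. Pick $n_o\ge 1$ with $\sup_K\tilde\Phi[\tilde u_{n_o}]\le 1$. On the complement $K^c$, the defining property $\{\rho\ge 1\}\subset K$ implies $\rho<1$, while Lemma \ref{res-03} together with \eqref{eq-24} and $\tilde u_{n_o}\le\1$ gives $\tilde\Phi[\tilde u_{n_o}]\le\tilde\Phi[\1]=\rho<1$ there. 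Combining the two bounds, $\tilde\Phi[\tilde u_{n_o}]\le\1$ everywhere on $\XX$.

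Second, I would transcribe the proof of Lemma \ref{res-12} into the tilde setting with $U=\1$; this is legitimate because Lemma \ref{res-24} provides the tilde analogs of \eqref{eq-14}--\eqref{eq-15}, which in turn validate the tilde versions of Lemmas \ref{res-08} and \ref{res-10}. The bound $\tilde\Phi[\tilde u_{n_o}]\le\1$ gives $\tilde u_{n_o+1}=1/(n_o+1)\vee\tilde\Phi[\tilde u_{n_o}]\ge\tilde\Phi[\tilde u_{n_o}]\ge\tilde\Phi[\tilde u_{n_o+1}]$ by a further use of Lemma \ref{res-03}, so the ratio $\tilde\Phi[\tilde u_{n_o+1}]/\tilde u_{n_o+1}\le 1$ pointwise, while its $\mu$-integral equals $1$ by the tilde version of Lemma \ref{res-08} (applicable with $c=n_o+1$ since $1/(n_o+1)\le\tilde u_{n_o+1}\le 1$). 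Equality must therefore hold $\mu$-almost everywhere, and the tilde version of Lemma \ref{res-10} promotes this to an identity everywhere. The scheme then stabilizes, $\tilde u_m=\tilde u_{n_o+1}$ for every $m\ge n_o+1$, so $\tilde u_*=\tilde u_{n_o+1}=\tilde\Phi[\tilde u_*]$ everywhere, with $\tilde u_*\ge 1/(n_o+1)>0$; this contradicts the standing assumption $\tilde\Phi[\tilde u_*]\equiv 0$.

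The only genuinely non-mechanical step is the joint use of Dini on $K$ and of the strict bound $\rho<1$ on $K^c$ to obtain $\tilde\Phi[\tilde u_{n_o}]\le\1$ on all of $\XX$; this is precisely the pay-off of the twisting construction, whose whole purpose is to replace $\1$ by the continuous function $\rho$ controlled off $K$. Everything else is a routine adaptation of arguments already developed in the proof of Theorem \ref{res-11}.
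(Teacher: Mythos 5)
Your proof is correct, but it takes a genuinely different route from the paper's. The paper avoids a contradiction argument by splitting into two exclusive cases: either (a) there is some $n_o$ with $\tilde\Phi[\tilde u_{n_o}]<1$ everywhere, in which case the (tilde analogue of) Lemma \ref{res-12} yields $0<\tilde u_*=\tilde\Phi[\tilde u_*]$ everywhere; or (b) the sets $A_n:=\{\tilde\Phi[\tilde u_n]\ge 1\}$ are nonempty for every $n$, in which case the inclusion $A_n\subset\{\tilde\Phi[\1]\ge 1\}=\{\rho\ge 1\}\subset K$ together with \eqref{eq-25} makes $(A_n)$ a decreasing sequence of nonempty compact subsets of $K$, and the finite intersection property gives $\emptyset\ne\bigcap_n A_n=\{\tilde\Phi[\tilde u_*]\ge 1\}$, i.e.\ \eqref{eq-23}. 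You instead assume \eqref{eq-23} fails and use Dini's theorem on $K$ (again driven by \eqref{eq-25} and the compactness of $K$) together with the off-$K$ bound $\tilde\Phi[\tilde u_n]\le\tilde\Phi[\1]=\rho<1$ on $K^c$ to place the sequence in the paper's case (a), which Lemma \ref{res-12} shows to be incompatible with $\tilde\Phi[\tilde u_*]\equiv 0$. Both arguments rest on the same structural facts — the continuity of each $\tilde\Phi[\tilde u_n]$ on the compact $K$, the twisted identity $\tilde\Phi[\1]=\rho$ with $\{\rho\ge 1\}\subset K$, and Lemma \ref{res-12} transposed to the tilde setting — and both are compactness arguments (nested compacts versus Dini). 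The paper's version has the mild advantage that in case (b) it produces a concrete $x_o\in\bigcap_n A_n$ directly, without routing through Lemma \ref{res-12}; yours is slightly slicker in collapsing the two cases into a single Dini-plus-contradiction step. One can also note that Dini gives $\sup_K\tilde\Phi[\tilde u_n]\to 0$, so the bound $\le 1$ you use could be sharpened to $<1$ for $n$ large, matching the paper's case (a) exactly, but $\le 1$ is all that Lemma \ref{res-12} requires.
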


\begin{proof}
Two exclusive cases are considered: (a) there exists $n_o\ge 1$ such that $\{\tilde\Phi[\tilde u _{ n_o}]< 1\}=\XX,$ or (b) for all $n\ge 1,$   $\{\tilde\Phi[\tilde u_n]\ge 1\}\not= \emptyset.$
\begin{enumerate}[(a)]
\item
If $\tilde\Phi[ \tilde u _{ n_o}]<1$ everywhere,  then Lemma \ref{res-12} tells us that 
$0<\tilde u_*=\tilde\Phi[\tilde u_*]
$
everywhere.
\item
In the alternate case, for each $n\ge 1,$  $A_n:= \left\{\tilde\Phi[\tilde u_n]\ge 1\right\}$ is nonempty.
\\
 As $(\tilde u_n)$ is decreasing and upper bounded by $\1$ and $\tilde\Phi$ is increasing, we see with \eqref{eq-24} that $(A_n)$ is a decreasing sequence of  subsets of $ \left\{\tilde\Phi[\1]\ge 1\right\} = \left\{ \rho\ge 1\right\}.$  By construction of $\tilde\Phi,$ we have chosen   $ \rho$ such that  $\left\{ \rho\ge 1\right\}\subset K.$ It follows that 
\begin{align*}
A _{ n+1}\subset A_n\subset K,\quad A_n\not=\emptyset,\quad \forall n\ge 1.
\end{align*}
If in addition, we suppose  that \eqref{eq-25} holds,
we observe that $(A_n)$ is a decreasing sequence of  nonempty compact sets. Hence its limit $A _{ \infty}=\cap _{ n\ge 1}A_n= \left\{ \tilde\Phi[\tilde u_*]\ge 1\right\} $ is nonempty. This precisely means  that \eqref{eq-23} holds for any $x_o$ in $A _{ \infty}.$
\end{enumerate}
This complete the proof of the lemma.
\end{proof}

\begin{proof}[Proof of Proposition \ref{res-17}]

We see with \eqref{eq-24}, that 
\begin{align*}
\IY p(x,y)\,\bar \bb(dy)=
\IY p(x,y) \bar \bb(y)\, \nn(dy)=\IY p(x,y) \tilde\Psi[\1] ^{ -1}(y)\, \nu(dy)=\bar u(x) \rho(x)< \infty,
\end{align*} 
for all $x\in\XX.$ Under our Hypotheses \ref{hyp-02}, this implies that 
\begin{align}\label{eq-26}
\left\{ p(x, \cdot) \tilde\Psi[\1] ^{ -1} ; x\in K\right\}  \textrm{ is uniformly $\nu$-integrable.}
\end{align} 
Because of the assumed continuity of $x\mapsto p(x,y),$ we obtain that $x\mapsto \IY p(x,y)\,\bar \bb(dy)=\bar u(x) \rho(x)$ is continuous on $K$, and it follows with the assumed continuity of $ \rho,$ that $d \bar\aa/ d \mu= \bar u ^{ -1}$ is also continuous on $K.$
\\
On the other hand, for all $n\ge 1,$
\begin{align*}
\tilde \Phi[\tilde u_n](x)
	:= \frac{d\bar\aa}{d \mu}(x) \IY  p(x,y) \tilde\Psi[\tilde u_n](y) ^{ -1} \nu(dy),
\end{align*}
is finite for all $x\in\XX$ because $\tilde\Phi[\tilde u_n]\le\tilde\Phi[\1]= \rho$, and so is $\IY  p(x,y) \tilde\Psi[\tilde u_n](y) ^{ -1} \nu(dy)$. But $\tilde\Psi[\tilde u_n] ^{ -1}\le \tilde\Psi[\1] ^{ -1}$ and \eqref{eq-26} imply that $ \left\{ p(x,\cdot) \tilde\Psi[\tilde u_n]^{ -1} ; x\in K\right\} $ is uniformly $\nu$-integrable, which implies that $\tilde\Phi[\tilde u_n]$ is continuous on $K.$
We conclude with Corollary \ref{res-15} and  Lemma \ref{res-16}.
\end{proof}

\subsection*{Proof of Corollary \ref{res-21}} \label{pc21}

It is enough to verify the Hypotheses \ref{hyp-02} with $$K:= \left\{x\in\Rn; |x|\le 1\right\}, $$  the unit ball. 
With the assumed  monotonicity of $ \theta$, we see that
\begin{align*}
[y-x|\ge|y-\bar x|\ge L
\implies p(x,y)\le p(\bar x,y).
\end{align*}
As
\begin{align*}
[y-x|\ge |y-\bar x|
\iff (\bar x-x)\scal \Big(y- \frac{x+\bar x}{2}\Big)\ge 0,
\end{align*}
 is the inequation (in $y$) of a half space, for any $\bar x$ in  the sphere $S:= \left\{ x: | x|=2\right\} $ centered at $0$ with radius 2, the set 
 \begin{align*}
 \YY(\bar x):= \bigcap _{ x\in K} \left\{y \in\Rn: [y-x|\ge |y-\bar x| \right\} 
 \end{align*}
 is a  convex set.  One can check that it contains an affine cone with vertex $3\bar x/4$ generated by a spherical cap $C(\bar x)$ on $S,$ centered at $\bar x$ with some positive radius. Hence,
 $\left\{y:|y|\ge 2\right\} \subset  \bigcup _{ \bar x\in S}\YY(\bar x),$
  and extracting a finite covering of the compact  $S$ from  $\{C(\bar x); \bar x\in S\}, $  we can pick finitely many points $\bar x_1,\dots,\bar x_n$ in $S$ such that
  \begin{align*}
\left\{y:|y|\ge 2\right\} \subset  \bigcup _{ i\le n}\YY(\bar x_i).
 \end{align*}
 On the other hand, because $ \theta$ is positive, bounded and continuous, the ratio\\ $c:=\sup_{ x\le 1, y\le 2+L} p(x,y)/ \inf_{ x\le 1, y\le 2+L} p(x,y)$ is finite and
\begin{align*}
\sup _{ x\in K}p(x,y)\le   c\ p(0,y),\quad \forall y: |y|\le 2+L.
\end{align*}
Note that any $x\in K$ instead of $x=0$ would do the job in this inequality. Putting everything together, we see that
\begin{align*}
\sup _{ x\in K} p(x,y)\le c\ p(0,y)+ \sum _{ i\le n}p(\bar x_i,y),
\quad \forall y\in \Rn,
\end{align*}
 demonstrating the desired property.
\hfill$\Box$

\section{A second variant}

We look at another way to replace Lemma \ref{res-09}. Again $p$ is positive and the hypotheses  \eqref{eq-14} and \eqref{eq-15}  are assumed to hold, but no topology is required and the Hypotheses \ref{hyp-01} are replaced by the Hypotheses \ref{hyp-03}.

\begin{lemma}\label{res-19}
Under the  Hypothesis  \ref{hyp-03}-(c), if there exists $x_o$ such that   $\Phi[u_*](x_o)=0$ and $p(x_o,y)>0, \forall y, \nu\ae,$ then the sequence $(\Phi[u_n]/U)$ converges uniformly to $0.$
\end{lemma}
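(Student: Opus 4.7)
\textbf{Proof plan for Lemma \ref{res-19}.}

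The goal is to show $\sup_{x\in\XX} \Phi[u_n](x)/U(x) \to 0$. The natural approach is a Hölder inequality argument that exploits Hypothesis \ref{hyp-03}-(d); indeed, that hypothesis is tailor-made to produce exactly the estimate we need. Set, as in Lemma \ref{res-09}, $\gamma_{u_n}(dy) := \Psi[u_n](y)^{-1}\,\nu(dy)$, so that $\Phi[u_n](x) = \IY p(x,y)\,\gamma_{u_n}(dy)$. Since $u_n \le U$ implies $\Psi[u_n] \ge \Psi[U]$ by monotonicity (Lemma \ref{res-03}), we have $\gamma_{u_n} \le \Psi[U]^{-1}\nu$ for every $n$. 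Moreover, since $p(x_o,y)>0$ for $\nu$-a.e. $y$, the ratio $p(x,y)/p(x_o,y)$ is well defined on a set of full $\gamma_{u_n}$-measure.

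The first step is to insert this ratio and apply Hölder's inequality with conjugate exponents $r$ and $r'=r/(r-1)$ (the $r>1$ provided by Hypothesis \ref{hyp-03}-(d) at the point $x_o$):
\begin{align*}
\Phi[u_n](x)
&= \IY \frac{p(x,y)}{p(x_o,y)}\,p(x_o,y)\,\gamma_{u_n}(dy) \\
&\le \Bigl(\IY \Bigl(\tfrac{p(x,y)}{p(x_o,y)}\Bigr)^{r} p(x_o,y)\,\gamma_{u_n}(dy)\Bigr)^{1/r}
\Bigl(\IY p(x_o,y)\,\gamma_{u_n}(dy)\Bigr)^{(r-1)/r}.
\end{align*}
The second step is to bound the first factor using $\gamma_{u_n} \le \Psi[U]^{-1}\nu$ and Hypothesis \ref{hyp-03}-(d): that factor is at most $(c\,U(x)^r)^{1/r} = c^{1/r}U(x)$. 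The second factor is exactly $\Phi[u_n](x_o)^{(r-1)/r}$. Combining:
\begin{equation*}
\frac{\Phi[u_n](x)}{U(x)} \le c^{1/r}\,\Phi[u_n](x_o)^{(r-1)/r}, \qquad \forall x\in\XX,\ \forall n\ge 1.
\end{equation*}

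The third step is to pass to the limit. Under Hypothesis \ref{hyp-03}-(c), $\Phi[U](x_o)<\infty$, so the argument used in Lemma \ref{res-04}-(c)-(ii) (monotone convergence of $\Psi[u_n]\uparrow\Psi[u_*]$ and then dominated convergence for $\Phi$, legitimate at $x_o$ because $\Phi[u_n](x_o)\le\Phi[U](x_o)<\infty$) yields $\Phi[u_n](x_o) \to \Phi[u_*](x_o) = 0$. Taking the supremum in $x$ of the displayed inequality and letting $n\to\infty$ gives $\sup_x \Phi[u_n](x)/U(x) \to 0$, which is the asserted uniform convergence.

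I do not expect any genuine obstacle: the hypothesis \ref{hyp-03}-(d) has been engineered precisely so that the Hölder splitting above produces the bound $c^{1/r}U(x)\,\Phi[u_n](x_o)^{(r-1)/r}$. The only point requiring a little care is the justification of pointwise convergence $\Phi[u_n](x_o) \to \Phi[u_*](x_o)$ at the specific point $x_o$ (rather than merely $\mu$-a.e.), which is handled by the finiteness $\Phi[U](x_o)<\infty$ coming from Hypothesis \ref{hyp-03}-(c).
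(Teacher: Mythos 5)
Your proposal is correct and matches the paper's own proof: both insert the ratio $p(x,y)/p(x_o,y)$, apply Hölder's inequality with the exponent $r$ from Hypothesis \ref{hyp-03}-(d), bound the first factor by $c^{1/r}U(x)$ via $\gamma_{u_n}\le\gamma_U$ together with that hypothesis, and identify the second factor as a power of $\Phi[u_n](x_o)$. You are in fact a bit more careful than the paper in explicitly invoking Hypothesis \ref{hyp-03}-(c) (i.e.\ $\Phi[U](x_o)<\infty$) to justify the pointwise convergence $\Phi[u_n](x_o)\to\Phi[u_*](x_o)=0$, a step the paper leaves implicit; you also correctly use item (d) where the paper's text twice writes ``(c)'' by an apparent typo.
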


\begin{proof}
As in the proof of Lemma \ref{res-09}, denote $ \gamma_u(dy):= \Psi[u](y) ^{ -1}\, \nu(dy)$, so that
$\Phi[u](x)=\IY p(x,y)\, \gamma_u(dy).$ By monotonicity of $(u_n)$ and $\Psi$, 
$	
0\le \gamma _{ u _{ n+1}}\le \gamma _{ u_n} \le \gamma _{ U},
\ \forall n\ge1.
$	
\\
Let $x_o$ be such that $\Phi[u_*](x_o)=0$ and $r_*>1$ be the conjugate number of $r$ appearing at Hypothesis \ref{hyp-03}-(c): $1/r+1/r_*=1.$ By Hölder's inequality, for any $n\ge 1$ and any $x,$ 
\begin{align*}
\Phi[u_n](x&)=\IY p(x,y)\, \gamma _{ u_n}(dy)
	=\IY \frac{p(x,y)}{p(x_o,y)}\ p(x_o,y) ^{ 1/r} p(x_o,y) ^{ 1/r_*}\, \gamma _{ u_n}(dy)\\
	&\le \left(\IY p(x_o,y)\, \gamma _{ u_n}(dy)\right) ^{1/ r_*}
		\left(\IY \left(\frac{p(x,y)}{p(x_o,y)}\right) ^r\ p(x_o,y) \, \gamma _{ U}(dy)\right) ^{ 1/r},
\end{align*}
where we used $ \gamma _{ u_n}\le \gamma_U.$ We see with our hypothesis that
\begin{align*}
\sup_x \Phi[u_n]/U(x)
	\le c ^{ 1/r}\Phi[u_n](x_o) ^{ 1/r_*} \underset{n\to \infty}{\longrightarrow} 0,
\end{align*}
which is the announced result.
\end{proof}

\begin{proposition}\label{res-18}
Under the Hypotheses \ref{hyp-03}, the limit $u_*$ of the scheme $(u_n)$ defined by \eqref{eq-03}  is positive  and solves the fixed point equation  \eqref{eq-02}:
\begin{align*}
0< u_*=\Phi[u_*]\le U,\quad \textrm{everywhere}.
\end{align*}
\end{proposition}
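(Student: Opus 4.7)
My plan is to mimic the completion of the proof of Theorem~\ref{res-11} at the end of Section~\ref{sec-main}, simply substituting the uniform convergence Lemma~\ref{res-19} for Lemma~\ref{res-09}. Indeed, all the machinery of Section~\ref{sec-prelim} goes through once I check that the hypotheses~\eqref{eq-14} and~\eqref{eq-15} are in force, and the only place where the specific integrability structure enters is in upgrading the dichotomy of Lemma~\ref{res-13} to an actual contradiction.

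First I would observe that Hypotheses~\ref{hyp-03}-(b) and~(c) are literally the assumptions~\eqref{eq-15} and~\eqref{eq-14}, stated everywhere rather than almost everywhere. Consequently Lemma~\ref{res-04} applies: the scheme $(u_n)$ of~\eqref{eq-03} decreases to a pointwise limit $u_*$ satisfying $0\le u_*=\Phi[u_*]\wedge U\le\Phi[u_*]\le\Phi[U]<\infty$ $\mu$-a.e. By Lemma~\ref{res-07}, it will suffice to verify that $\Phi[u_*]>0$ $\mu$-almost everywhere, for then the conclusion $0<u_*=\Phi[u_*]\le U$ holds everywhere.

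Next, I invoke the dichotomy: since $p>0$ everywhere by Hypothesis~\ref{hyp-03}-(a), Lemma~\ref{res-13} tells me that either $\Phi[u_*](x)=0$ for every $x\in\XX$ or $\Phi[u_*](x)>0$ for every $x\in\XX$. I argue by contradiction and suppose the former. Pick any $x_o\in\XX$; positivity of $p$ trivially gives $p(x_o,y)>0$ for all $y$, so the hypotheses of Lemma~\ref{res-19} are met (and this is precisely where Hypothesis~\ref{hyp-03}-(d) is consumed). That lemma then furnishes the uniform convergence $\sup_x \Phi[u_n](x)/U(x)\to 0$. In particular there exists $n_o\ge 1$ with $\Phi[u_{n_o}]\le U$ everywhere on $\XX$, and Lemma~\ref{res-12} now yields $0<u_*=\Phi[u_*]$ everywhere, contradicting $\Phi[u_*](x_o)=0$. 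Hence $\Phi[u_*]>0$ everywhere, and Lemma~\ref{res-07} closes the argument.

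There is no serious obstacle here since the technical work has already been absorbed into Lemma~\ref{res-19}: the H\"older estimate in that lemma, combined with the integral bound in Hypothesis~\ref{hyp-03}-(d), is exactly the substitute for the dominated-convergence/tightness argument used in Lemma~\ref{res-09} under the Hypotheses~\ref{hyp-01}. The only mild subtlety to watch for is the distinction between ``everywhere'' and ``$\mu$-a.e.'' at the final step, which is handled (as usual in this paper) by the ``from almost everywhere to everywhere'' trick of Lemma~\ref{res-10}, already built into Lemma~\ref{res-07}.
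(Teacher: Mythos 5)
Your proof is correct and follows essentially the same route as the paper's: reduce via Lemma~\ref{res-07} to showing $\Phi[u_*]>0$, argue by contradiction using the uniform convergence furnished by Lemma~\ref{res-19}, and conclude with Lemma~\ref{res-12}. The only cosmetic difference is your explicit invocation of the dichotomy Lemma~\ref{res-13}, which the paper leaves implicit since under Hypothesis~\ref{hyp-03}-(a) the positivity $p(x_o,\cdot)>0$ needed to apply Lemma~\ref{res-19} holds for any $x_o$ anyway.
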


\begin{proof}
By Lemma \ref{res-07}, it is sufficient to prove that $\Phi[u_*]>0$ everywhere. Let us do it ad absurdum. 
\\
Suppose that $\Phi[u_*](x_o)=0,$ for some $x_o.$   We know with Lemma \ref{res-19} that $\Phi[u_n]/U$ converges uniformly to zero. Therefore, there exists some $n_o\ge 1$ such that  $\Phi[u _{ n_o}]\le U.$ Applying Lemma \ref{res-12}, we obtain:   $0<u_*=\Phi[u _{ n_o+1}]=\Phi[u_*]$, everywhere on $\XX,$ which contradicts our assumption that $\Phi[u_*](x_o)=0$ for some $x_o.$
\end{proof}


\end{document}